\DeclareFontFamily{OT1}{rsfs}{}
\DeclareFontShape{OT1}{rsfs}{n}{it}{<-> rsfs10}{}
\DeclareMathAlphabet{\curly}{OT1}{rsfs}{n}{it}
\newcommand{\eqnum}{\refstepcounter{equation}\textup{\tagform@{\theequation}}}
\newcommand\beq[1]{\begin{equation}\label{#1}}
	\newcommand\eeq{\end{equation}}
\newcommand\beqa{\begin{eqnarray*}}
	\newcommand\eeqa{\end{eqnarray*}}
\title[Derived categories of Quot schemes]{Derived categories of Quot schemes of locally free quotients via 
categorified Hall products}
\date{}
\author{Yukinobu Toda}
\DeclareFontFamily{U}{rsfs}{%
	\skewchar\font127}
\DeclareFontShape{U}{rsfs}{m}{n}{%
	<-6>rsfs5<6-8.5>rsfs7<8.5->rsfs10}{}
\DeclareSymbolFont{rsfs}{U}{rsfs}{m}{n}
\DeclareRobustCommand*\rsfs{%
	\@fontswitch\relax\mathrsfs}
\theoremstyle{plain}
\newtheorem{thm}{Theorem}[section]
\newtheorem{prop}[thm]{Proposition}
\newtheorem{lem}[thm]{Lemma}
\newtheorem{rmk}[thm]{Remark}
\newtheorem{cor}[thm]{Corollary}
\newtheorem{prop-defi}[thm]{Proposition-Definition}
\newtheorem{thm-defi}[thm]{Theorem-Definition}
\newtheorem{lem-defi}[thm]{Lemma-Definition}
\newtheorem{exam}[thm]{Example}
\newcommand{\ssslash}{/\!\!/}
\newcommand{\aA}{\mathcal{A}}
\newcommand{\cC}{\mathcal{C}}
\newcommand{\eE}{\mathscr{E}}
\newcommand{\fF}{\mathcal{F}}
\newcommand{\gG}{\mathscr{G}}
\newcommand{\hH}{\mathcal{H}}
\newcommand{\mM}{\mathcal{M}}
\newcommand{\oO}{\mathcal{O}}
\newcommand{\pP}{\mathscr{P}}
\newcommand{\qQ}{\mathcal{Q}}
\newcommand{\rR}{\mathcal{R}}
\newcommand{\xX}{\mathcal{X}}
\newcommand{\yY}{\mathcal{Y}}
\newcommand{\zZ}{\mathcal{Z}}
\newcommand{\fM}{\mathfrak{M}}
\newcommand{\Hom}{\mathop{\rm Hom}\nolimits}
\newcommand{\dR}{\mathbf{R}}
\newcommand{\Hilb}{\mathop{\rm Hilb}\nolimits}
\newcommand{\Pic}{\mathop{\rm Pic}\nolimits}
\newcommand{\Spec}{\mathop{\rm Spec}\nolimits}
\newcommand{\rank}{\mathop{\rm rank}\nolimits}
\newcommand{\sss}{\mathchar`-\rm{ss}}
\newcommand{\cneq}{\mathrel{\raise.095ex\hbox{:}\mkern-4.2mu=}}
\newcommand{\eqcn}{\mathrel{=\mkern-4.5mu\raise.095ex\hbox{:}}}
\newcommand{\Sym}{\mathop{\rm Sym}\nolimits}
\newcommand{\Quot}{\mathop{\rm Quot}\nolimits}
\newcommand{\RHom}{\mathop{\dR\mathrm{Hom}}\nolimits}
\newcommand{\GL}{\mathop{\rm GL}\nolimits}
\newcommand{\PGL}{\mathop{\rm PGL}\nolimits}
\newcommand{\MF}{\mathop{\rm MF}\nolimits}
\newcommand{\Crit}{\mathop{\rm Crit}\nolimits}
\newcommand{\wt}{\mathrm{wt}}
\newcommand{\inclusion}{\ar@<-0.3ex>@{^{(}->}[r]}
\newcommand{\upinclusion}{\ar@<-0.3ex>@{^{(}->}[u]}
\newcommand{\leinclusion}{\ar@<-0.3ex>@{^{(}->}[l]}
\newcommand{\doinclusion}{\ar@<-0.3ex>@{^{(}->}[d]}
\newcommand{\diasquare}{\ar@{}[rd]|\square}
\newcommand{\C}{\mathbb{C}^{\ast}}
\renewcommand{\theequation}{%
	\thesection.\arabic{equation}}
\begin{document}
	
	\begin{abstract}
	We prove Qingyuan Jiang's conjecture on 
	semiorthogonal decompositions of derived categories of 
	Quot schemes of locally free quotients. The
	author's result on categorified Hall products 
	for Grassmannian flips
 is applied to prove the conjecture. 
	\end{abstract}
	
	\maketitle
	
	
	\section{Introduction}
	\subsection{Quot formula}
	Let $X$ be a smooth quasi-projective variety over $\mathbb{C}$, 
	$\mathscr{G}$ a coherent sheaf on $X$ and $d \ge 0$ be an integer. 
	The relative Quot scheme 
	\begin{align}\label{quot}
		\Quot_{X, d}(\mathscr{G}) \to X
		\end{align}
	parametrizes rank $d$ locally free quotients of $\mathscr{G}$. 
	All the fibers of the above morphism are Grassmannian varieties, 
	whose dimensions are different in general. 
	Here we remark that $\Quot_{X, 0}(\mathscr{G})=X$. 
	
	Let us take a right exact sequence 
	\begin{align}\label{seq:right}
		\mathscr{E}^{-1} \stackrel{\phi}{\to} \mathscr{E}^0 \to \mathscr{G} \to 0
		\end{align}
	where $\mathscr{E}^0$ and $\mathscr{E}^{-1}$ are locally 
	free sheaves on $X$. 
	Let $\delta:=\rank(\mathscr{E}^0)-\rank(\mathscr{E}^{-1})$. 
	By taking its dual, we obtain the 
	right exact sequence 
	\begin{align*}
		\mathscr{E}_0 \stackrel{\phi^{\vee}}{\to}
		\mathscr{E}_1 \to \mathscr{H} \to 0
		\end{align*}
	where $\mathcal{E}_i:=(\mathscr{E}^{-i})^{\vee}$ and 
	$\mathscr{H}$ is the cokernel of $\phi^{\vee}$. 
	Note that $\rank(\mathcal{E}_1)-\rank(\mathcal{E}_0)=-\delta$. 
		As a dual side of (\ref{quot}), we also consider the relative Quot scheme 
	$\Quot_{X, d}(\mathscr{H}) \to X$. 
	
We will see that there exist 
quasi-smooth derived schemes over $X$
(see Section~\ref{subsec:derived})
\begin{align}\label{intro:dquot}
	\mathbf{Quot}_{X, d}(\mathscr{G})\to X
	\leftarrow 
	\mathbf{Quot}_{X, d}(\mathscr{H})
	\end{align}
which depend on a sequence (\ref{seq:right}) and 
with classical truncations 
 $\Quot_{X, d}(\mathscr{G})$, 
$\Quot_{X, d}(\mathscr{H})$
that have virtual dimensions 
 $\dim X+\delta d-d^2$, 
$\dim X-\delta d-d^2$ respectively. 
The following is the main result in this paper: 
\begin{thm}\emph{(Theorem~\ref{cor:sod})}\label{intro:thm}
	Suppose that $\delta \ge 0$. 
	There is a semiorthogonal decomposition of the form
	\begin{align*}
		D^b(\mathbf{Quot}_{X, d}(\mathscr{G}))=
		\left\langle \binom{\delta}{i}\mbox{-copies of }
		D^b(\mathbf{Quot}_{X, d-i}(\mathscr{H})) : 
		0\le i\le \mathrm{min}\{d, \delta\}  \right\rangle. 		
		\end{align*}
	\end{thm}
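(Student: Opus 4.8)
The plan is to reduce the statement to an explicit \'etale-local model in which the two derived schemes appear as the two GIT chambers of a single smooth quotient stack with a regular function, to recognize the resulting wall-crossing as a Grassmannian flip, and then to invoke the author's semiorthogonal decomposition for the categorified Hall products attached to such flips.

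First I would set up the local model. Since $\mathscr{G}$ has homological dimension $\le 1$, \'etale locally on $X$ there is a two-term locally free resolution $0\to V_1\xrightarrow{\,A\,}V_0\to\mathscr{G}\to 0$ with $\rk V_0-\rk V_1=\delta$; enlarging it by a common free summand I may assume $\rk V_0,\rk V_1\ge d$, and then $\mathscr{H}=\mathrm{coker}(A^{\vee}\colon V_0^{\vee}\to V_1^{\vee})$. A rank-$d$ locally free quotient of $\mathscr{G}$ is the same as a rank-$d$ locally free quotient $q\colon V_0\twoheadrightarrow\mathcal{Q}$ with $q\circ A=0$, and a rank-$d$ locally free quotient of $\mathscr{H}$ is the same as a rank-$d$ sub-bundle $\gamma\colon\mathcal{S}\hookrightarrow V_1$ with $A\circ\gamma=0$; so $\mathbf{Quot}_{X,d}(\mathscr{G})$ and $\mathbf{Quot}_{X,d}(\mathscr{H})$ are quasi-smooth, realized as the derived zero loci of the tautological sections of $\HOM(V_1,\mathcal{Q})$ on the relative Grassmannian bundle of rank-$d$ quotients of $V_0$, respectively of $\HOM(\mathcal{S},V_0)$ on the relative Grassmannian bundle of rank-$d$ sub-bundles of $V_1$, with the virtual dimensions stated in the theorem. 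Both are recovered from the single smooth stack
\[
\mathfrak{Y}\cneq\big[\,\big(\HOM(V_0,\mathbb{C}^d)\oplus\HOM(\mathbb{C}^d,V_1)\big)/\GL_d\,\big]\longrightarrow X,\qquad w\cneq\tr(q\,A\,\gamma),
\]
by restricting to the GIT chamber in which $q$ is surjective, respectively $\gamma$ is injective, and integrating out the linearly occurring variable: a form of linear Koszul duality gives $\MF(\mathfrak{Y}^{\mathrm{ss}}_{+},w)\simeq D^b(\mathbf{Quot}_{X,d}(\mathscr{G}))$ and $\MF(\mathfrak{Y}^{\mathrm{ss}}_{-},w)\simeq D^b(\mathbf{Quot}_{X,d}(\mathscr{H}))$. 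Everything here depends only on the pair $(V_1\xrightarrow{A}V_0,\,d)$, so it will suffice to construct the decomposition \'etale locally by functors that are Fourier--Mukai transforms along correspondences defined directly from $\mathscr{G}$ and $\mathscr{H}$ (nested Quot schemes), and then descend.

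Next I would apply the flip theorem. The birational transformation between $\mathfrak{Y}^{\mathrm{ss}}_{+}$ and $\mathfrak{Y}^{\mathrm{ss}}_{-}$ across the wall $\theta=0$ is a Grassmannian flip, and $(X,\,V_1\xrightarrow{A}V_0,\,d)$ is an instance of the input of the author's theorem on categorified Hall products for Grassmannian flips. That theorem provides a semiorthogonal decomposition of $\MF(\mathfrak{Y}^{\mathrm{ss}}_{+},w)$ whose blocks are indexed by the Kempf--Ness strata of the wall: the $i$-th center is the fixed locus of the one-parameter subgroup of $\GL_d$ scaling an $i$-dimensional summand $U_0\subset\mathbb{C}^d$, and it decouples as $\mathbf{Z}_i\simeq B\GL_i\times\mathfrak{Y}_{d-i}$, where $\mathfrak{Y}_{d-i}$ is the stack of the same shape built from $\mathbb{C}^{d-i}$ and $\GL_i$ acts on $U_0$; the $i$-th block is the image under the categorified Hall product functor $\Upsilon_i$ (push--pull along the attracting locus) of a weight-window of $D^b(\mathbf{Z}_i)$, and that theorem shows the $\Upsilon_i$ to be fully faithful with semiorthogonal images, the $i=0$ block being the embedded copy of $\MF(\mathfrak{Y}^{\mathrm{ss}}_{-},w)$. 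Applying the first step with $d$ replaced by $d-i$ identifies the induced GIT quotient of the $\mathfrak{Y}_{d-i}$-factor with $D^b(\mathbf{Quot}_{X,d-i}(\mathscr{H}))$; and a count of the $\GL_i$-weights on the normal bundle of the $i$-th stratum --- its weight-$(+1)$ and weight-$(-1)$ parts are of the form $U_0\otimes P$ and $U_0^{\vee}\otimes R$ with $\rk P-\rk R=\delta$ --- shows that the corresponding weight-window on the $B\GL_i$-factor is exactly (the relative form of) Kapranov's exceptional collection on $\mathrm{Gr}(i,\delta)$, so it is nonzero precisely for $i\le\delta$ and then consists of $\binom{\delta}{i}$ orthogonal exceptional objects. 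Hence the $i$-th block is $\binom{\delta}{i}$ copies of $D^b(\mathbf{Quot}_{X,d-i}(\mathscr{H}))$, the range being $0\le i\le\min\{d,\delta\}$; and since each $\Upsilon_i$ can be matched with a Fourier--Mukai transform along a nested Quot scheme correspondence over $X$ that is defined independently of the chosen resolution, the \'etale-local decompositions agree on overlaps and glue to the global one.

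The main obstacle, I expect, is twofold. First, one must verify that the relative situation over $X$ --- where $\mathscr{G}$ is only generically locally free, so that the Quot schemes are genuinely derived and non-reduced --- satisfies the hypotheses of the Grassmannian-flip theorem, and that the induced Kempf--Ness stratification is the one described above with centers of the stated form. Second, one must carry out the globalization: proving that the functors $\Upsilon_i$ and their images do not depend on the auxiliary local resolution, which is precisely what forces one to recognize the $\Upsilon_i$ as Fourier--Mukai functors along intrinsic nested-Quot correspondences rather than only through the local GIT model. Granting these two points, the precise shape of the answer --- the binomial multiplicities $\binom{\delta}{i}$ and the range $0\le i\le\min\{d,\delta\}$ --- is a consequence of the weight bookkeeping sketched above.
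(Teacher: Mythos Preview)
Your overall strategy is the one the paper uses: realize both derived Quot schemes via Koszul duality as the two GIT chambers of the stack $[\,(\Hom(\mathscr{E}^0,\mathbb{C}^d)\oplus\Hom(\mathbb{C}^d,\mathscr{E}^{-1}))/\GL_d\,]$ with the trace super-potential, and then apply the categorified Hall product semiorthogonal decomposition for the resulting Grassmannian flip.

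There are two points where your plan diverges unnecessarily from the paper. First, you work \'etale locally and propose to glue; but since $X$ is smooth quasi-projective, a \emph{global} two-term locally free resolution $0\to\mathscr{E}^{-1}\to\mathscr{E}^0\to\mathscr{G}\to 0$ exists, and the paper uses it to build the stack $\yY(d)$, the function $w$, and the categorified Hall product functors \emph{globally} from the outset. The only local-to-global step in the paper is the verification that these globally defined functors are fully faithful with semiorthogonal images, which is checked formally locally on $X$ (where one reduces to the already-proved case of a complete local base) using that the functors admit right adjoints. In particular your ``main obstacle''---recognizing the functors as Fourier--Mukai transforms along intrinsic nested-Quot correspondences in order to glue---is not needed, and the paper explicitly remarks that it does \emph{not} identify the kernel objects. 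Second, the paper organizes the Hall product by iterating rank-one pieces: the semiorthogonal summands are indexed by sequences $0\le j_1\le\cdots\le j_l\le\delta-l$ via functors $\oO_{B\mathbb{C}^{\ast}}(j_1)\boxtimes\cdots\boxtimes\oO_{B\mathbb{C}^{\ast}}(j_l)\boxtimes\mathbb{W}_b(d-l)\to\mathbb{W}_a(d)$, rather than by a single $\GL_i$-block with a Kapranov window as you sketch. The two bookkeeping schemes give the same $\binom{\delta}{l}$ count, but the iterated $\mathbb{C}^{\ast}$ version is what is available from the cited input.
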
 
The above result is 
a generalization of the 
conjecture by Qingyuan Jiang~\cite[Conjecture~A.5]{JiangQuot}
when $X$ is smooth
(see Corollary~\ref{cor:jiang}).
The case of $d=1$ is called \textit{projectivization formula} and 
proved in~\cite[Theorem~5.5]{Kuzhom}, \cite[Theorem~3.4]{Jiangproj}, \cite[Theorem~4.6.11]{TocatDT}. 
The 
$d=2$ case is proved in~\cite[Theorem~6.19]{JiangQuot}. 
The Quot formula in Theorem~\ref{intro:thm}
recovers 
several known formulas (see~\cite[Section~1.4.2]{JiangQuot} for details), 
e.g. 
Kapranov exceptional collection for Grassmannian~\cite{Kapranov}
(by setting $X$ to be a point), 
the projectivization formula~\cite{Kuzhom, Jiangproj, TocatDT}
(by setting $d=1$). 
The proof involves 
semiorthogonal 
decomposition of Grassmannian flip~\cite{BNFV, Toconi}, 
which itself generalizes Bondal-Orlov standard flip formula~\cite{B-O2}. 
	
Suppose that $\mathscr{G}$ has homological dimension 
less than or equal to one.
Then there is a sequence (\ref{seq:right}) so that $\phi$ is injective, 
and in 
that case $\mathscr{H}=\mathcal{E}xt^1_{\oO_X}(\mathscr{G}, \oO_X)$
(which is independent of a choice of (\ref{seq:right}) with $\phi$ injective), 
and $\delta=\rank(\mathscr{G})$. 
	In~\cite[Conjecture~A.5]{JiangQuot}, the conjecture 
is stated for derived categories of the 
classical Quot schemes $\Quot_{X, d}(\mathscr{G})$, 
$\Quot_{X, d}(\mathscr{H})$, 
when $\mathscr{G}$ has homological dimension less than or 
equal to one, $\mathscr{H}=\mathcal{E}xt^1_{\mathcal{O}_X}(\mathscr{G}, \mathcal{O}_X)$, and 
under some Tor-independence condition. 
The Tor-independence condition 
implies that
the dimensions of the above classical Quot schemes coincide with
the virtual dimensions if they are non-empty (see~\cite[Lemma~6.7]{JiangQuot}).
So in this case, they are equivalent to 
$\mathbf{Quot}_{X, d}(\mathscr{G})$, 
$\mathbf{Quot}_{X, d}(\mathscr{H})$ respectively, where 
we take a sequence (\ref{seq:right}) so that 
$\phi$ is injective. 
	We also note that, if $\Quot_{X, d}(\mathscr{G})=\emptyset$
then $\mathbf{Quot}_{X, d}(\mathscr{G})$ is equivalent to $\emptyset$ 
regardless of the virtual dimension, and the same is true for 
$\Quot_{X, d}(\mathscr{H})$. 
Therefore we obtain the following corollary, which 
proves~\cite[Conjecture~A.5]{JiangQuot} when $X$ is smooth: 

\begin{cor}\label{cor:jiang}
	Suppose that $\mathscr{G}$ has homological dimension less
	than or equal to one, and 
	$\mathscr{H}:=\mathcal{E}xt^1_{\oO_X}(\mathscr{G}, \oO_X)$. 
	Assume that $\dim \Quot_{X, d}(\mathscr{G})=\dim X+\delta d-d^2$
and $\dim \Quot_{X, d}(\mathscr{H})=\dim X-\delta d-d^2$, where $\delta=\rank(\mathscr{G}) \ge 0$. 
Then we have a semiorthogonal decomposition of the form 
\begin{align*}
	D^b(\Quot_{X, d}(\mathscr{G}))=
\left\langle \binom{\delta}{i}\mbox{-copies of }
D^b(\Quot_{X, d-i}(\mathscr{H})) : 
0\le i\le \mathrm{min}\{d, \delta\}  \right\rangle. 
\end{align*}	
	\end{cor}

\begin{exam}\label{exam:locfree}
	When $\mathscr{G}$ is locally free, then 
	$\mathscr{H}=0$.
	Suppose that $\delta \ge d$. Then 
	$\mathrm{Quot}_{X, d}(\mathscr{G})$ is a Grassmannian bundle over $X$ 
	with fiber $\mathrm{Gr}(d, \delta)$, 
	and $\mathrm{Quot}_{X, d}(\mathscr{H})=X$ for $d=0$, $\emptyset$ for $d>0$. 
	In this case, Corollary~\ref{cor:jiang}
	gives 
	\begin{align*}
		D^b(\Quot_{X, d}(\mathscr{G}))=
		\left\langle \binom{\delta}{d}\mbox{-copies of }
		D^b(X) \right\rangle. 		
	\end{align*}
	When $X$ is a point, the above semiorthogonal decomposition gives 
	Kapranov exceptional collection of Grassmannian variety~\cite{Kapranov}. 	 
\end{exam}

\begin{rmk}
	In~\cite[Conjecture~A.5]{JiangQuot}, the conjecture 
	is formulated in a more general assumption on $X$. We focus 
	on the case that $X$ is a smooth quasi-projective variety over $\mathbb{C}$
	in order to avoid some technical subtleties. 
	This assumption is enough for applications in~\cite[Section~1.5]{JiangQuot}. 
	\end{rmk}

\begin{rmk}
	Each fully-faithful functor 
	$D^b(\mathbf{Quot}_{X, d-i}(\mathscr{H})) \hookrightarrow 
	D^b(\mathbf{Quot}_{X, d}(\gG))$ in Theorem~\ref{intro:thm} can be 
	shown to be of Fourier-Mukai type, though 
	we will not discuss its details. However
	the proof of Theorem~\ref{intro:thm} does not give any 
	information about the kernel objects. 	
	\end{rmk}

We prove Theorem~\ref{intro:thm} by interpreting 
$(-1)$-shifted cotangent  
derived schemes in (\ref{intro:dquot}) (see Section~\ref{subsec:cotan}
for $(-1)$-shifted cotangent derived schemes or stacks) as 
d-critical Grassmannian flip in the sense of~\cite{Toddbir}
(see Remark~\ref{rmk:dcrit}), 
and then use Koszul duality together with 
categorified Hall products 
for families of Grassmannian flips. 
The 
categorified Hall products for 
Grassmannian flip are
used in~\cite{Toconi} 
as an intermediate 
step toward the categorification of 
wall-crossing formula of Donaldson-Thomas invariants on the 
resolved conifold. 

\subsection{Applications}
The Quot formula in Corollary~\ref{cor:jiang}
has lots of applications on derived categories of classical 
moduli spaces (see~\cite[Section~1.5]{JiangQuot}). 
Here we mention two examples:
one is a generalization of~\cite[Corollary~5.11]{TodDK} and~\cite[Corollary~1.3]{JiangQuot}
on semiorthogonal decompositions of 
varieties associated with Brill-Noether loci for curves, 
and the other one is a categorical blow-up formula of 
Hilbert schemes of points on surfaces obtained by Koseki~\cite{Kosekiup}. 

Let $C$ be a smooth projective curve over $\mathbb{C}$
with genus $g$. 
We denote by $\Pic^d(C)$ the Picard variety parameterizing degree $d$
line bundles on $C$, which is a $g$-dimensional complex torus and (non-canonically)
isomorphic to the Jacobian $\mathrm{Jac}(C)$ of $C$. 
The Brill-Noether locus on $\Pic^d(C)$ is defined by 
\begin{align*}
	W_d^r(C) \cneq \{L \in \Pic^d(C) : h^0(L) \ge r+1\}. 
	\end{align*}
There is a scheme $G_d^r(C)$ parameterizing $g_d^r$'s which appears in 
the classical study of Brill-Noether loci (see~\cite[Chapter 4, Section~3]{MR770932}). 
It is set theoretically given by 
\begin{align*}
	G_d^r(C) =\{(L, W) : L \in W_d^r(C), W \subset H^0(C, L), 
	\dim W=r+1\}
	\end{align*}
where $W$ is a vector subspace. 
If $C$ is a general curve, then 
$G_d^r(C)$ is a smooth projective 
variety of expected dimension $g-(r+1)(g-d+r)$. 
As explained in~\cite[Section~1.5.1]{JiangQuot},
for any $\delta \ge 0$ 
there is a coherent sheaf $\gG$ on $X=\Pic^{g-1+\delta}(C)$ 
of rank $\delta$ that has 
homological dimension less than or equal to 1 and 
such that 
\begin{align*}
	\Quot_{X, r+1}(\gG)=
	G_{g-1+\delta}^r(C), \ 
	\Quot_{X, r+1}(\mathscr{H})=G_{g-1-\delta}^r(C). 
	\end{align*}
Here $\mathscr{H}=\eE xt^1_{\oO_X}(\mathscr{G}, \oO_X)$. 
By applying Corollary~\ref{cor:jiang}, we have the following: 
\begin{cor}\label{cor:appl}
	Let $C$ be a general smooth projective curve 
	with genus $g$. 
	Then for any $r\in \mathbb{Z}_{\ge 0}$ and $\delta \ge 0$, there is a 
	semiorthogonal decomposition 
	\begin{align*}
		D^b(G_{g-1+\delta}^r(C))=
		\left \langle \binom{\delta}{i}\mbox{-copies of }
		D^b(G_{g-1-\delta}^{r-i}(C))  : 
		0\le i \le \mathrm{min}\{\delta, r+1 \} \right \rangle. 
		\end{align*}
	Here for $i=r+1$, we have
	$G_{g-1-\delta}^{-1}(C)=\Pic^{g-1-\delta}(C)$. 
	\end{cor}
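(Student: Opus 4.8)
The plan is to obtain Corollary~\ref{cor:appl} by applying Theorem~\ref{intro:thm} in the Brill-Noether situation described just above. Take $X=\Pic^{g-1+\delta}(C)$, smooth projective of dimension $g$, and let $\gG$ be the coherent sheaf on $X$ of rank $\delta$ and homological dimension $\le 1$ from~\cite[Section~1.5.1]{JiangQuot}, with $\mathscr{H}=\mathcal{E}xt^1_{\oO_X}(\gG,\oO_X)$ as in~(\ref{intro:defH}). Besides the identification $\Quot_{X,r+1}(\gG)=G^{r}_{g-1+\delta}(C)$ recorded above, the construction in~\cite[Section~1.5.1]{JiangQuot} gives
\begin{align*}
\Quot_{X,k}(\mathscr{H})=G^{k-1}_{g-1-\delta}(C)\quad(1\le k\le r+1),\qquad \Quot_{X,0}(\mathscr{H})=X,
\end{align*}
where in the last equation $X=\Pic^{g-1+\delta}(C)$ is matched with $G^{-1}_{g-1-\delta}(C)=\Pic^{g-1-\delta}(C)$ through the (non-canonical) isomorphism of torsors under $\mathrm{Jac}(C)$, which is immaterial for derived categories. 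Feeding $d=r+1$ into Theorem~\ref{intro:thm} now yields a semiorthogonal decomposition of $D^b(\mathbf{Quot}_{X,r+1}(\gG))$ into $\binom{\delta}{i}$ copies of $D^b(\mathbf{Quot}_{X,r+1-i}(\mathscr{H}))$ for $0\le i\le\min\{d,\delta\}=\min\{r+1,\delta\}$, which is the index range appearing in the statement.

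It remains to recognize each derived Quot scheme above as one of the smooth varieties $G^{\bullet}_{\bullet}(C)$. Following Remark~\ref{rmk:tor}, since these derived schemes are quasi-smooth it is enough to know that their classical truncations are either empty or of dimension equal to the virtual dimension. The Brill-Noether expected dimension $g-(s+1)(g-e+s)$ of $G^{s}_{e}(C)$ specializes to
\begin{align*}
g-(r+1)(r+1-\delta)=g+\delta(r+1)-(r+1)^2,\qquad g-(r-i+1)(r-i+1+\delta)=g-\delta(r+1-i)-(r+1-i)^2
\end{align*}
for $(e,s)=(g-1+\delta,r)$ and $(e,s)=(g-1-\delta,r-i)$, and the right-hand sides are precisely the virtual dimensions of $\mathbf{Quot}_{X,r+1}(\gG)$ and of $\mathbf{Quot}_{X,r+1-i}(\mathscr{H})$ (namely $\dim X+\delta d-d^2$ and $\dim X-\delta d-d^2$ with $d$ equal to $r+1$ and to $r+1-i$ respectively). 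Since $C$ is general, each $G^{s}_{e}(C)$ is, as recalled in the discussion preceding the corollary (see also~\cite[Chapter~IV, Section~3]{MR770932}), either empty or a smooth projective variety of exactly its expected dimension. Consequently every derived Quot scheme occurring above is already classical and agrees with the corresponding $G^{\bullet}_{\bullet}(C)$, and the decomposition produced by Theorem~\ref{intro:thm} becomes the one asserted.

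The genuine input here---beyond quoting Theorem~\ref{intro:thm}---is this final step, and it is the only place the hypothesis that $C$ is general enters: for an arbitrary smooth projective curve one still gets Theorem~\ref{intro:thm}'s decomposition at the level of derived Quot schemes, but can no longer pass to the classical Brill-Noether varieties. The remaining work is routine bookkeeping, the one point deserving a little care being that the explicit sheaf $\gG$ of~\cite[Section~1.5.1]{JiangQuot} does realize the identifications $\Quot_{X,k}(\mathscr{H})=G^{k-1}_{g-1-\delta}(C)$ compatibly for \emph{all} $k$ occurring in the decomposition, not only $k=r+1$, together with the correct matching of the degenerate term $\mathbf{Quot}_{X,0}(\mathscr{H})=X$ with $D^b(\Pic^{g-1-\delta}(C))$; both follow once the description of $\gG$ is unwound.
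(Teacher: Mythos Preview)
Your proposal is correct and follows the paper's own approach: the paper simply records the identifications $\Quot_{X,r+1}(\gG)=G_{g-1+\delta}^r(C)$ and $\Quot_{X,r+1}(\mathscr{H})=G_{g-1-\delta}^r(C)$ coming from \cite[Section~1.5.1]{JiangQuot} and then says ``By applying Theorem~\ref{intro:thm}, we have the following,'' leaving the passage from derived to classical Quot schemes implicit in the earlier remark that $G_d^r(C)$ is smooth of expected dimension for general $C$. You have spelled out exactly this implicit step (matching virtual dimensions with Brill--Noether expected dimensions and invoking Remark~\ref{rmk:tor}), as well as the bookkeeping for the degenerate summand $i=r+1$, so your argument is a fleshed-out version of the same proof rather than a different one.
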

The case of $r=0$ gives the semiorthogonal decomposition of 
symmetric products
\begin{align*}
	D^b(\mathrm{Sym}^{g-1+\delta}(C))=
	\left\langle D^b(\mathrm{Sym}^{g-1-\delta}(C)), 
	\delta \mbox{-copies of }D^b(\mathrm{Jac}(C)) \right\rangle 
	\end{align*}
proved in~\cite[Corollary~5.11]{TodDK}. 
The case of $r=1$ is given in~\cite[Corollary~1.3]{JiangQuot}, 
and it is 
\begin{align*}
	D^b(G_{g-1+\delta}^1(C))=
	\left\langle 
	D^b(G_{g-1-\delta}^1(C)), \delta \mbox{-copies of }
	D^b(\mathrm{Sym}^{g-1-\delta}(C)), 
	\binom{\delta}{2}\mbox{-copies of }
	D^b(\mathrm{Jac}(C)) \right\rangle. 
\end{align*}
The result of Corollary~\ref{cor:appl} extends the 
above results to an arbitrary $r \in \mathbb{Z}_{\ge 0}$. 

Another application is on semiorthogonal decompositions of 
Hilbert schemes of points on surfaces under blow-up. 
Let $S$ be a smooth projective surface and
$\widehat{S} \to S$ be a blow-up at a point. 
Then the G\"ottsche formula~\cite{Got}
for the Euler numbers of Hilbert schemes of points $\Hilb^n(S)$ 
in particular implies the blow-up formula 
\begin{align}\label{blow:Eu}
	\sum_{n\ge 0} e(\Hilb^n(\widehat{S})) q^n
	=\sum_{n\ge 0} e(\Hilb^n(S))q^n \cdot \prod_{d\ge 1}\frac{1}{(1-q^d)}. 
\end{align}
Note that if we define $p(j)$ to be the number of partitions of $j$, 
we have the formula 
	\begin{align*}
	\sum_{j\ge 0}p(j) q^j 
	=\prod_{d\ge 1}\frac{1}{(1-q^d)}. 
\end{align*}
On the other hand, Nakajima-Yoshioka~\cite{NakYos}
proved that $\Hilb^n(S)$ and $\Hilb^{n}(\widehat{S})$
are related by wall-crossing diagrams. 
One can show that each wall-crossing diagram fits into the framework of 
Quot formula in Theorem~\ref{intro:thm}, see~\cite[Theorem~4.1]{NakYos}, \cite[Theorem~4.1]{Kosekiup}. 
Based on this observation and using Theorem~\ref{intro:thm}, the following 
blow-up formula is obtained in~\cite{Kosekiup}: 
\begin{thm}\emph{(Koseki~\cite{Kosekiup})}\label{intro:thm2}
	There is a semiorthogonal decomposition of the form 
	\begin{align*}
	D^b(\Hilb^n(\widehat{S}))=\left \langle p(j)\mbox{-copies of }
	D^b(\Hilb^{n-j}(S)) : j=0, \ldots, n  \right\rangle. 	
		\end{align*}
		\end{thm}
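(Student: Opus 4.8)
The plan is to derive Theorem~\ref{intro:thm2} from Theorem~\ref{intro:thm} by realizing $\Hilb^n(\widehat S)$ and $\Hilb^n(S)$ as the two ends of a finite chain of wall-crossings, each of which is a Quot diagram of the shape (\ref{intro:dquot}), and then composing the resulting semiorthogonal decompositions while tracking multiplicities.

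First I would set up the wall-crossing. Let $C\cong\mathbb{P}^1\subset\widehat S$ be the exceptional curve. Following Nakajima--Yoshioka~\cite{NakYos}, for each integer $m\ge 0$ there is a smooth projective moduli space $\widehat M^{[n]}_m$ of $m$-stable perverse ideal sheaves of colength $n$ on $\widehat S$, with $\widehat M^{[n]}_0=\Hilb^n(\widehat S)$ and with $\widehat M^{[n]}_m$ independent of $m$ for $m\gg 0$, the stabilized space being identified via $\pi_\ast$ with $\Hilb^n(S)$. Consecutive spaces $\widehat M^{[n]}_m$ and $\widehat M^{[n]}_{m+1}$ are related by an elementary transformation: over the locus where a certain $\Ext^1$-space jumps, one replaces a perverse ideal sheaf by the kernel or cokernel of a canonical map involving copies of $\oO_C(-m-1)$, i.e.\ a Hecke modification along the exceptional curve. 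I would make this precise together with the colength bookkeeping: a modification by $i$ copies of the relevant factor at the $m$-th wall drops $n$ by a fixed positive multiple $c_m i$.

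The heart of the argument is to show that the $m$-th elementary transformation is exactly a diagram
\begin{align*}
	\mathbf{Quot}_{X_m, d_m}(\gG_m)\to X_m\leftarrow \mathbf{Quot}_{X_m, d_m}(\mathscr{H}_m),
\end{align*}
where $X_m$ is the moduli space appearing at the wall (built from $\widehat M^{[n']}_{m+1}$, and ultimately from Hilbert schemes on $S$), $\gG_m$ is, up to a twist, the first cohomology of the derived pushforward of $\RHom$ of the universal perverse ideal sheaf with $\oO_C(-m-1)$, and $\mathscr{H}_m=\mathcal{E}xt^1_{\oO_{X_m}}(\gG_m,\oO_{X_m})$. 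Concretely, rank-$d_m$ locally free quotients of $\gG_m$ should encode the Hecke data building $\widehat M^{[n]}_m$, while Serre duality on $C\cong\mathbb{P}^1$ identifies $\mathbf{Quot}_{X_m, d_m}(\mathscr{H}_m)$ with the opposite side of the flip. Here one must verify: $\gG_m$ has homological dimension $\le 1$ with a well-defined rank $\delta_m\ge 0$; the quasi-smooth derived schemes (\ref{intro:dquot}) attached to $(X_m,\gG_m)$ have classical truncations equal to the Nakajima--Yoshioka moduli spaces; and, since these are smooth of the expected dimension so that the Tor-independence in Remark~\ref{rmk:tor} holds, Theorem~\ref{intro:thm} applies to the honest schemes. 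I expect this to be the main obstacle: pinning down $X_m$, $\gG_m$ and $\mathscr{H}_m$ uniformly in $m$ and $n$, checking the homological-dimension-one and rank hypotheses, and matching the derived structures is precisely what it means for each wall-crossing diagram to fit into the framework of Theorem~\ref{intro:thm}.

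Finally I would iterate and count. Applying Theorem~\ref{intro:thm} at each wall gives, for every $m$,
\begin{align*}
	D^b(\widehat M^{[n]}_m)=\left\langle \binom{\delta_m}{i}\text{-copies of } D^b(\widehat M^{[n-c_m i]}_{m+1}) : 0\le i\le \min\{d_m,\delta_m\}\right\rangle,
\end{align*}
with Example~\ref{exam:locfree} covering the walls at which $\gG_m$ has become locally free. Composing these semiorthogonal decompositions down the (finite, stabilizing) tree of wall-crossings expresses $D^b(\Hilb^n(\widehat S))$ as an iterated semiorthogonal decomposition into copies of $D^b(\Hilb^{n-j}(S))$ for $0\le j\le n$, where $j$ is the total colength removed along $C$. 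The multiplicity of $D^b(\Hilb^{n-j}(S))$ is then the sum, over all admissible sequences of wall-crossings removing total exceptional colength $j$, of the corresponding products of binomial coefficients; a combinatorial identity --- equivalently the partition generating function $\prod_{d\ge 1}(1-q^d)^{-1}=\sum_j p(j)q^j$ underlying the numerical blow-up formula (\ref{blow:Eu}) --- collapses this multiplicity to $p(j)$. Keeping the semiorthogonal ordering consistent through the iteration (one orders the summands by the sequence of wall indices) takes some care, but once the dictionary of the previous step is in place it is formal, and the only genuinely new input is the classical partition identity.
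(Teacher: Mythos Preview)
The paper does not itself prove Theorem~\ref{intro:thm2}; it only states the result, attributes it to Koseki~\cite{Kosekiup}, and indicates the strategy in the preceding paragraph: the Nakajima--Yoshioka wall-crossing diagrams relating $\Hilb^n(\widehat S)$ and $\Hilb^n(S)$ each fit into the framework of the Quot formula, and Theorem~\ref{intro:thm} is applied at each wall. Your proposal is precisely this strategy fleshed out, so there is nothing to compare against beyond the one-sentence sketch already in the paper; the details you identify as the main obstacle (constructing $X_m$, $\gG_m$, $\mathscr{H}_m$, verifying the homological-dimension and Tor-independence hypotheses, and the partition combinatorics) are exactly what is deferred to~\cite{Kosekiup}.
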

The semiorthogonal decomposition in Theorem~\ref{intro:thm2} categorifies
the blow-up formula (\ref{blow:Eu}). 

\subsection{Notation and convention}\label{subsec:notation}
In this paper, all the varieties or (derived) stacks are
defined over $\mathbb{C}$. 
For an introduction to derived algebraic geometry, 
we refer to~\cite{MR3285853}. 
For a locally free sheaf $\eE$ on a stack $X$, 
we often regard it as a total space of its 
associated vector bundle, 
i.e. $\Spec \Sym(\eE^{\vee}) \to X$. 
For a derived Artin stack $\fM$, we denote by $t_0(\fM)$ 
its classical truncation. 
Explicitly if $\fM=[\Spec A^{\bullet}/G]$ for a commutative dg-algebra 
$A^{\bullet}$ with non-positive degrees and an algebraic group $G$ acting on $A^{\bullet}$, 
we have $t_0(\fM)=[\Spec \hH^0(A^{\bullet})/G]$, also see Remark~\ref{rmk:truncation}. 
For a complex of vector bundles $\eE^{\bullet}$ with differential 
$d_{\eE^{\bullet}}$ on $X$, 
we denote by $\mathrm{Sym}(\eE^{\bullet})$ the sheaf of dg-algebras on $X$, 
whose underlying graded sheaf is the super-symmetric product of 
$\eE^{\bullet}$, and the differential $d_{\mathrm{Sym}(\eE^{\bullet})}$ is uniquely determined by 
the condition that $d_{\mathrm{Sym}(\eE^{\bullet})}|_{\eE^{\bullet}}=d_{\eE^{\bullet}}$
and the Leibniz rule. 

For a derived stack $\fM$, the 
triangulated category $D^b(\fM)$ is defined to be 
the homotopy category of the $\infty$-category of 
quasi-coherent sheaves on $\fM$ with bounded coherent cohomologies. 
The tangent complex of $\fM$ is denoted by 
$\mathbb{T}_{\fM}$
 (see~\cite[Section~3.1]{MR3285853}), 
 and the cotangent complex $\mathbb{L}_{\fM}$ is defined to be its dual. 
A derived stack $\fM$ is called quasi-smooth if its cotangent 
complex $\mathbb{L}_{\fM}$ is perfect and
$\mathbb{L}_{\fM}|_{t_0(\fM)}$
has cohomological amplitude contained in $[-1, 1]$.
The rank of $\mathbb{L}_{\fM}|_{t_0(\fM)}$ is called the virtual 
dimension of $\fM$. 
For example if $\yY$ is a smooth (classical) Artin stack, 
$\eE \to \yY$ is a vector bundle with a section $s$, 
the derived fiber product 
$\yY \times_{0, \eE, s} \yY$ is quasi-smooth with virtual dimension 
$\dim \yY -\mathrm{rank}(\eE)$, which is called 
derived zero locus of $s$. 
When $\yY=\Spec A$ for a commutative $\mathbb{C}$-algebra and 
$\eE$ is determined by a projective $A$-module $M$, 
then the derived zero locus is $\Spec K(A, M, s)$, where 
$K(A, M, s)$ is the Koszul 
complex 
$\cdots \to\wedge M^{\vee} \stackrel{s}{\to} M^{\vee} \stackrel{s}{\to} A \to 0$, see~\cite[Last paragraph of Section~2.2]{MR3285853}.

	\subsection{Acknowledgements}
The author is grateful to Qingyuan Jiang
for the discussion on his conjecture
and several comments on the first draft of the paper, and 
Naoki Koseki on the discussion on the application of Theorem~\ref{intro:thm}
to Hilbert schemes of 
points on surfaces. 
The author also thanks to the referees for many valuable comments. 
The author is supported by World Premier International Research Center
Initiative (WPI initiative), MEXT, Japan, and Grant-in Aid for Scientific
Research grant (No.~19H01779) from MEXT, Japan.

\section{Proof of Theorem~\ref{intro:thm}}
\subsection{Derived structures of Quot schemes}\label{subsec:derived}
Let $X$ be a smooth quasi-projective variety over $\mathbb{C}$, 
$\mathscr{G}$ a coherent sheaf on it. 
Recall that the 
Quot scheme $\Quot_{X, d}(\mathscr{G})$ represents the functor 
\begin{align*}
	\qQ uot_{X, d}(\mathscr{G}) \colon 
	({Sch}/X)^{op} \to ({Set})
	\end{align*}
which sends $T \to X$ to the equivalence classes 
of $\mathscr{G}_T \twoheadrightarrow \pP$ where $\pP$ is a locally 
free sheaf on $T$ of rank $d$
and $\mathscr{G}_T$ is the pull-back of $\mathscr{G}$ to $T$. 

Let us take a right exact sequence
\begin{align}\label{loc:free}
	\mathscr{E}^{-1} \stackrel{\phi}{\to} \mathscr{E}^0 \to \mathscr{G} \to 0
	\end{align}
such that $\eE^i$ are locally free sheaves of finite rank on $X$. 
The surjection $\mathscr{E}^0 \twoheadrightarrow \mathscr{G}$
induces the closed immersion 
\begin{align}\label{quot:closed}
	\Quot_{X, d}(\mathscr{G}) \hookrightarrow 
	\Quot_{X, d}(\mathscr{E}^0). 
	\end{align}
Below we fix a vector space $V$ of dimension $d$, and 
denote by 
$\GL_X(V) \cneq \GL(V) \times X \to X$
the group scheme over $X$. 
We also set 
\begin{align*}
		\mathfrak{C}(\eE^0) \cneq 
	[\hH om(\eE^0, V \otimes \oO_X)/\GL_X(V)]. 
	\end{align*}
Here we have
identified the locally free sheaf $\hH om(\eE^0, V \otimes \oO_X)$ with 
the associated vector bundle over $X$, i.e. 
$\Spec \Sym(\eE^0 \otimes V^{\vee}) \to X$. 
\begin{lem}\label{lem:open}
There is an open immersion 
$\Quot_{X, d}(\mathscr{E}^0) \subset \mathfrak{C}(\eE^0)$. 
\end{lem}
\begin{proof}
For $T \to X$, the $T$-valued points 
of the stack $\mathfrak{C}(\eE^0)$
consist of 
$(\pP, s)$ where $\pP$ is a vector bundle on $T$ of rank $d$
and $s \colon \eE^0_T \to \pP$ is a morphism. 
Indeed giving a $X$-morphism 
$T \to \mathfrak{C}(\eE^0)$ is equivalent to 
giving a $\GL_T(V)$-torsor 
$\mathscr{F} \to T$ and a $\GL_T(V)$-equivariant 
morphism $\mathscr{F} \to \hH om(\eE^0_T, V \otimes \oO_T)$. 
The $\GL_T(V)$-torsor $\mathscr{F}$ corresponds to a vector bundle 
$\pP$ on $T$ such that $\mathscr{F}$ is isomorphic to the local framing of $\pP$, i.e. 
the set of sections of $\mathscr{F}$ over an \'{e}tale morphism $U \to T$
is the set of isomorphisms 
$\pP_{U} \stackrel{\cong}{\to} V \otimes \oO_U$. 
Then the $\GL_T(V)$-equivariant morphism 
$\mathscr{F} \to \hH om(\eE^0_{T}, V \otimes \oO_T)$
corresponds to a vector bundle morphism $\eE_T^0 \to \pP$ on $T$. 

From the definition of 
$\Quot_{X, d}(\eE^0)$, it is isomorphic to 
the open substack of $\mathfrak{E}(\eE^0)$ whose $T$-valued points 
correspond to $(\pP, s)$ such that $s$ is surjective. 
\end{proof}
We have the following vector bundle over 
$\mathfrak{C}(\eE^0)$ with a section $s$
\begin{align}\label{sec:hom}
		\xymatrix{
		\left[\left(\hH om(\eE^0, V \otimes \oO_X) \oplus \hH om(\eE^{-1}, V \otimes \oO_X)\right)/\GL_X(V)\right] 
	\ar[r] & \ar@/_20pt/[l]_s
\mathfrak{C}(\eE^0). 
}
	\end{align}
The section $s$ is induced by the $\GL_X(V)$-equivariant morphism 
\begin{align}\label{sec:hom2}
	s \colon \hH om(\eE^0, V \otimes \oO_X) \to 
	\hH om(\eE^0, V \otimes \oO_X) \oplus \hH om(\eE^{-1}, V \otimes \oO_X), \ 
	f \mapsto (f, f\circ \phi). 
	\end{align}
We denote by $\eE^{\bullet}$ the two term complex 
$(\eE^{-1} \stackrel{\phi}{\to} \eE^0)$ 
such that $\eE^0$ is of degree zero. 
Let $\mathfrak{C}(\eE^{\bullet})$ be the
derived zero locus of $s$.
The Koszul complex associated with $s$ is 
\begin{align*}
	\cdots \to \wedge^2 \eE^{-1} \otimes \mathrm{Sym}(\eE^0 \otimes V^{\vee})
	\stackrel{s}{\to} \eE^{-1} \otimes \mathrm{Sym}(\eE^0 \otimes V^{\vee})
	\stackrel{s}{\to} \mathrm{Sym}(\eE^0 \otimes V^{\vee}) \to 0
	\end{align*}
which coincides with $\mathrm{Sym}(\eE^{\bullet} \otimes V^{\vee})$, 
see Subsection~\ref{subsec:notation} for the dg-algebra 
structure on $\mathrm{Sym}(\eE^{\bullet} \otimes V^{\vee})$
over $X$.  
Therefore $\mathfrak{C}(\eE^{\bullet})$ is 
written as
\begin{align}\label{def:CE}
	\mathfrak{C}(\eE^{\bullet}) \cneq 
	\left[\Spec \Sym(\eE^{\bullet} \otimes V^{\vee})/\GL_X(V) \right]. 
	\end{align} 
Note that $\mathfrak{C}(\eE^{\bullet})$ is a derived closed substack 
of $\mathfrak{C}(\eE^0)$. 
We set 
\begin{align*}
	\mathbf{Quot}_{X, d}(\gG) \cneq 
	\Quot_{X, d}(\eE^0) \cap \mathfrak{C}(\eE^{\bullet}),  
	\end{align*}
in other word $\mathbf{Quot}_{X, d}(\gG)$ is the derived 
zero locus of $s$ restricted to the open substack 
$\Quot_{X, d}(\eE^0) \subset \mathfrak{C}(\eE^0)$. 
\begin{lem}\label{lem:vdimQUot}
	The derived stack 
	$\mathbf{Quot}_{X, d}(\gG)$
	has virtual dimension $\dim X+\delta d-d^2$,
with classical truncation 
$\Quot_{X, d}(\gG)$. 
\end{lem}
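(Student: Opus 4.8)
The plan is to extract the two claims (the computation of the virtual dimension, and the identification of the classical truncation with $\Quot_{X,d}(\gG)$) from the explicit description of $\mathbf{Quot}_{X,d}(\gG)$ as a derived zero locus intersected with an open substack. For the truncation, I would first recall that taking classical truncations commutes with open immersions and with (underived) intersection, and that for a derived zero locus $t_0$ of the section $s$ in (\ref{sec:hom}) is just the honest zero locus of $s$ on $\mathfrak{C}(\eE^0)$. So $t_0(\mathfrak{C}(\eE^{\bullet}))$ is the closed substack of $\mathfrak{C}(\eE^0)$ cut out by the equations $f\circ\phi=0$ (the first component of (\ref{sec:hom2}) being the identity contributes nothing). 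Under the identification of $T$-points of $\mathfrak{C}(\eE^0)$ with pairs $(\pP, s\colon \eE^0_T\to\pP)$ from Lemma~\ref{lem:open}, the condition $f\circ\phi=0$ says exactly that $s$ factors through $\eE^0_T/\im(\phi_T)=\gG_T$. Intersecting with the open condition that $s$ be surjective (which defines $\Quot_{X,d}(\eE^0)$ inside $\mathfrak{C}(\eE^0)$), I recover precisely the functor $\qQ uot_{X,d}(\gG)$, i.e. surjections $\gG_T\twoheadrightarrow\pP$ onto rank-$d$ bundles. This gives $t_0(\mathbf{Quot}_{X,d}(\gG))\cong \Quot_{X,d}(\gG)$, compatibly with the closed immersion (\ref{quot:closed}).

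For the virtual dimension, I would use the general fact quoted in the conventions: the derived zero locus of a section of a vector bundle $\mathcal{V}$ on a smooth stack $\yY$ is quasi-smooth of virtual dimension $\dim\yY-\rk(\mathcal{V})$, and that virtual dimension, being the rank of the cotangent complex, is unchanged by passing to an open substack. Here $\yY=\mathfrak{C}(\eE^0)=[\hH om(\eE^0,V\otimes\oO_X)/\GL_X(V)]$, so
\begin{align*}
\dim\yY = \dim X + d\cdot\rk(\eE^0) - d^2,
\end{align*}
since the total space of $\hH om(\eE^0,V\otimes\oO_X)$ over $X$ has dimension $\dim X+d\,\rk(\eE^0)$ and we quotient by $\GL(V)$ of dimension $d^2$. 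The vector bundle whose zero locus we take is $\hH om(\eE^0,V\otimes\oO_X)\oplus\hH om(\eE^{-1},V\otimes\oO_X)$ modulo $\GL_X(V)$, of rank $d\,\rk(\eE^0)+d\,\rk(\eE^{-1})$. Subtracting,
\begin{align*}
\vdim\mathbf{Quot}_{X,d}(\gG) = \dim X + d\,\rk(\eE^0) - d^2 - d\,\rk(\eE^0) - d\,\rk(\eE^{-1}) = \dim X - d\,\rk(\eE^{-1}) + \dim X \ \text{(corrected below)}.
\end{align*}
The bookkeeping to get the stated answer: from (\ref{loc:free}) we have $\rk(\eE^0)-\rk(\eE^{-1})=\rk(\gG)=\delta$, so $d\,\rk(\eE^0)-d\,\rk(\eE^{-1})=\delta d$, and one is left with $\dim X+\delta d-d^2$, exactly as claimed. (The displayed line above double-counts $\dim X$; I will just present the single correct computation $\dim X + d\,\rk(\eE^0) - d^2 - d(\rk(\eE^0)+\rk(\eE^{-1})) = \dim X + \delta d - d^2$.)

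The only mild subtlety — and the step I would be most careful with — is making sure the open immersion $\Quot_{X,d}(\eE^0)\subset\mathfrak{C}(\eE^0)$ and the derived zero locus construction are compatible, i.e. that $\mathbf{Quot}_{X,d}(\gG)=\Quot_{X,d}(\eE^0)\cap\mathfrak{C}(\eE^{\bullet})$ is genuinely an open substack of the quasi-smooth $\mathfrak{C}(\eE^{\bullet})$, so that quasi-smoothness and the virtual dimension are inherited. This follows because $\Quot_{X,d}(\eE^0)\hookrightarrow\mathfrak{C}(\eE^0)$ is an open immersion by Lemma~\ref{lem:open}, pulling back $s$ in (\ref{sec:hom}) along an open immersion commutes with forming the derived zero locus, and $\mathbb{L}$ of an open substack is the restriction of $\mathbb{L}$ of the ambient stack — in particular its restriction to the classical truncation still has amplitude in $[-1,1]$ and the same rank. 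Everything else is the routine functor-of-points identification and dimension count sketched above.
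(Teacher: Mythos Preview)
Your identification of the classical truncation is correct and follows the same functor-of-points argument as the paper. The virtual-dimension computation, however, contains a genuine error. The arrow in (\ref{sec:hom}) from $[(\hH om(\eE^0,V\otimes\oO_X)\oplus\hH om(\eE^{-1},V\otimes\oO_X))/\GL_X(V)]$ to $\mathfrak{C}(\eE^0)=[\hH om(\eE^0,V\otimes\oO_X)/\GL_X(V)]$ is the projection onto the \emph{first} summand; the stack on the left is the \emph{total space} of the bundle, and the fibre is $\hH om(\eE^{-1},V\otimes\oO_X)$, of rank $d\,\rk(\eE^{-1})$. (You implicitly use this correct reading when you say ``the first component of (\ref{sec:hom2}) being the identity contributes nothing'': that first component is recording the base point, not a fibre coordinate.) But in the dimension count you then take the rank of the bundle to be $d\,\rk(\eE^0)+d\,\rk(\eE^{-1})$ and assert
\[
\dim X + d\,\rk(\eE^0) - d^2 - d\bigl(\rk(\eE^0)+\rk(\eE^{-1})\bigr) = \dim X + \delta d - d^2,
\]
which is false: the left side equals $\dim X - d\,\rk(\eE^{-1}) - d^2$, off from the right side by $d\,\rk(\eE^0)$. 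With the correct rank $d\,\rk(\eE^{-1})$ one gets
\[
\dim X + d\,\rk(\eE^0) - d^2 - d\,\rk(\eE^{-1}) = \dim X + \delta d - d^2,
\]
which is exactly the paper's computation. So your overall approach matches the paper's; you just need to correct the identification of the bundle's rank and the arithmetic that follows.
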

\begin{proof}
	The derived stack $\mathfrak{C}(\eE^{\bullet})$
	is a derived zero locus of $s$, 
	so it is quasi-smooth with virtual dimension 
	\begin{align*}
		\dim \mathfrak{C}(\eE^{0})-\rank(V \otimes \eE^{-1 \vee})
		&=\dim X +d \rank(\eE^0)-\dim \GL(V)-d \rank(\eE^{-1}) \\
		&=\dim X+\delta d-d^2. 
		\end{align*}	
		The derived stack 
	$\mathbf{Quot}_{X, d}(\gG)$
	is an open substack of $\mathfrak{C}(\eE^{\bullet})$, 
	so it also has virtual dimension $\dim X+\delta d-d^2$. 
	
	For a $X$-scheme $T \to X$, a $T$-valued point of the classical truncation 
	of $\mathbf{Quot}_{X, d}(\gG)$
	consists of a surjection $\eE^0_T \twoheadrightarrow \pP$
	such that the composition $\eE^{-1}_T \to \eE^0_T \twoheadrightarrow \pP$ is zero. 
	This is equivalent to giving a surjection 
	$\gG_T \twoheadrightarrow \pP$, i.e. 
	a $T$-valued point of $\Quot_{X, d}(\gG)$. 
	\end{proof}

By taking the dual of the sequence (\ref{loc:free}), 
we obtain the right exact sequence 
\begin{align*}
	\eE_0 \stackrel{\phi^{\vee}} \to \eE_1 \to \mathscr{H} \to 0. 
	\end{align*}
Here we have set $\eE_i:=(\eE^{-i})^{\vee}$, and $\mathscr{H}$ is defined 
to be the cokernel of $\phi^{\vee}$. 
We apply the above construction for the 
quotient $\eE_1 \twoheadrightarrow \mathscr{H}$.
By replacing 
$V$ with $V^{\vee}$ and noting $\GL_X(V)=\GL_X(V^{\vee})$, 
we have the closed immersion and an open immersion 
\begin{align}\notag
	\Quot_{X, d}(\mathscr{H}) \hookrightarrow \Quot_{X, d}(\eE_1)
	\subset \mathfrak{C}(\eE_1) \cneq [\hH om(\eE_1, V^{\vee} \otimes \oO_X)/\GL_X(V)]. 
	\end{align}
We also have the vector bundle with a section $s^{\vee}$
\begin{align}\label{sec:hom20}
	\xymatrix{
		\left[\left(\hH om(\eE_1, V^{\vee} \otimes \oO_X) \oplus 
		\hH om(\eE_0, V^{\vee} \otimes \oO_X)\right)/\GL_X(V)\right] 
		\ar[r] & \ar@/_20pt/[l]_{s^{\vee}}
		\mathfrak{C}(\eE_1). 
	}
\end{align}
The section $s^{\vee}$ is induced by the morphism 
\begin{align*}
	s^{\vee} \colon \hH om(\eE_1, V^{\vee} \otimes \oO_X) \to 
	\hH om(\eE_1, V^{\vee} \otimes \oO_X) \oplus 
	\hH om(\eE_0, V^{\vee} \otimes \oO_X), \ 
	f \mapsto (f, f\circ \phi^{\vee}). 
	\end{align*}
Similarly to (\ref{def:CE}), 
the derived zero locus of $s^{\vee}$ is written as 
\begin{align*}
	\mathfrak{C}(\eE_{\bullet}[1])
	\cneq \left[\Spec \Sym(\eE_{\bullet}[1] \otimes V)/\GL_X(V) \right].
	\end{align*}
Here $\eE_{\bullet}[1]$ is the complex 
$(\eE_0 \stackrel{\phi^{\vee}}{\to}
\eE_1)$ such that $\eE_1$ is of degree zero. 
We set 
\begin{align}\label{Quot:svee}
	\mathbf{Quot}_{X, d}(\mathscr{H})
	\cneq \Quot_{X, d}(\eE_1) \cap \mathfrak{C}(\eE_{\bullet}[1]). 
	\end{align}
The same proof of Lemma~\ref{lem:vdimQUot}
shows that 
$\mathbf{Quot}_{X, d}(\mathscr{H})$ 
has virtual dimension 
$\dim X-\delta d-d^2$, and its 
classical truncation is 
$\Quot_{X, d}(\mathscr{H})$. 


\subsection{$(-1)$-shifted cotangent derived stacks}\label{subsec:cotan}
For a derived Artin stack $\fM$, its $(-1)$-shifted cotangent 
is defined by (see~\cite{Calaque})
\begin{align*}
	\Omega_{\fM}[-1] \cneq \Spec \Sym_{\oO_{\fM}}(\mathbb{T}_{\fM}[1]). 
	\end{align*}
Here $\mathbb{T}_{\fM}$ is the tangent complex of $\fM$. 

In the case that $\fM$ is a derived zero locus, the 
classical truncation of $\Omega_{\fM}[-1]$ has 
the following critical locus description. 
Let $\yY=[Y/G]$ for a smooth quasi-projective scheme $Y$
and $G$ is an affine algebraic group acting on $Y$. 
Let $\fF \to \yY$ be a vector bundle on it with a section $s$, 
which is identified with a $G$-equivariant
vector bundle $F \to Y$ together with a $G$-invariant 
 section $\widetilde{s}$ 
of $F \to Y$. 
Suppose that $\fM$ is a derived zero locus of $s$, 
that is 
$\fM=[\widetilde{M}/G]$ where $\widetilde{M}$ is the derived zero 
locus of $\widetilde{s}$.  
Let $w$ be the function
\begin{align}\label{func:w}
	w \colon \fF^{\vee} \to \mathbb{A}^1, \ 
	w(y, v)=\langle s(y), v \rangle
	\end{align}
for $y \in \yY$ and $v \in \fF^{\vee}|_{y}$, which 
is identified with a $G$-invariant 
function $\widetilde{w}$ on $F^{\vee}$. 
We set 
\begin{align*}
	\Crit(w) \cneq [\Crit(\widetilde{w})/G] \subset 
	\fF^{\vee}
	\end{align*}
which is a closed substack of $\fF^{\vee}$. 
Here $\mathrm{Crit}(\widetilde{w}) \subset F^{\vee}$ is the scheme theoretic 
critical locus of $\widetilde{w}$, 
defined by the ideal generated by the 
image of $d\widetilde{w} \colon T_{F^{\vee}} \to \oO_{F^{\vee}}$. 
(Alternatively $\mathrm{Crit}(w)$ is the closed substack of $\fF^{\vee}$ 
defined by the ideal 
generated by the image of $dw \colon \hH^0(\mathbb{T}_{\fF^{\vee}}) \to \oO_{\fF^{\vee}})$. 
\begin{lem}\label{lem:crit:M}
	Suppose that $\fM$ is the derived zero locus of a 
	section $s$ 
	of a vector bundle $\fF \to \yY$ 
	for a quotient stack $\yY=[Y/G]$
	as above. 
	Then the classical truncation $t_0(\Omega_{\fM}[-1])$ 
of $\Omega_{\fM}[-1])$ is isomorphic to 
$\mathrm{Crit}(w)$. 
\end{lem}
\begin{proof}
	We denote by $M \subset Y$ the classical truncation of $\widetilde{M}$, 
	that is the classical zero locus of 
	$\widetilde{s}$.  
	Note that $M \subset Y$ is a $G$-invariant closed subscheme, 
	and we have $\mM \cneq t_0(\fM)=[M/G]$, see Remark~\ref{rmk:truncation}. 
	The shifted tangent complex $\mathbb{T}_{\fM}[1]$ restricted to 
	$\mM$ is given by 
	\begin{align*}
		\mathbb{T}_{\fM}[1]|_{\mM}=(\mathfrak{g} \otimes \oO_{\mM}
		\to T_{\yY}|_{\mM} \stackrel{ds}{\to} \fF|_{\mM})
		\end{align*}
	where 
	$T_{\yY}=[T_Y/G]$ which is a vector bundle on $\yY$, 
	$\fF$ is located in degree zero. 
   In particular $\fM$ is quasi-smooth, see Subsection~\ref{subsec:notation} 
   for the 
   definition of quasi-smoothness. 
	Let us take a distinguished triangle in $D^b(\fM)$
	\begin{align*}
		\rR \to \mathbb{T}_{\fM}[1] \to \mathbb{T}_{\fM}[1]|_{\mM}. 
		\end{align*}
	Here we regarded the last term as an object in $D^b(\fM)$ by 
	the push-forward of the closed immersion $\mM \hookrightarrow \fM$.
	Then $\rR$ is concentrated in negative degrees, 
	$\mathbb{T}_{\fM}[1]$ and $\mathbb{T}_{\fM}[1]|_{\mM}$ are concentrated on 
	non-positive degrees. Therefore by taking the symmetric products and the zero-th cohomology, we 
	have 
	\begin{align*}
		\hH^0(\mathrm{Sym}_{\oO_{\fM}}(\mathbb{T}_{\fM}[1])) \stackrel{\cong}{\to}
		\hH^0(\mathrm{Sym}_{\oO_{\mM}}(\mathbb{T}_{\fM}|_{\mM}[1])). 
		\end{align*}
	We also have the distinguished triangle 
	\begin{align*}
		\mathfrak{g}\otimes \oO_{\mM}[1] \to (T_{\yY}|_{\mM} \stackrel{ds}{\to}
		\fF|_{\mM}) \to \mathbb{T}_{\fM}[1]
		\end{align*}
	where in the middle term $\fF|_{\mM}$ is located in degree zero. 
	Again by taking the symmetric products and the zero-th cohomology, 
	we obtain 
	\begin{align*}
		\hH^0(\mathrm{Sym}_{\oO_{\mM}}(T_{\yY}|_{\mM} 
		\stackrel{ds}{\to}\fF|_{\mM}))
		\stackrel{\cong}{\to}
			\hH^0(\mathrm{Sym}_{\oO_{\mM}}(\mathbb{T}_{\fM}|_{\mM}[1])). 
		\end{align*}
	Therefore the stack 
	$t_0(\Omega_{\fM}[-1])$ is isomorphic to the classical truncation of 
	\begin{align*}
		\Spec \mathrm{Sym}_{\oO_{\mM}}(T_{\yY}|_{\mM} 
		\stackrel{ds}{\to}\fF|_{\mM})=
		[\Spec \mathrm{Sym}_{\oO_M}(T_Y|_{M} \stackrel{d\widetilde{s}}{\to} F|_{M})/G]. 
		\end{align*}
	The classical truncation of the 
	derived scheme 
	$\Spec \mathrm{Sym}(T_Y|_{M} \stackrel{d\widetilde{s}}{\to} F|_{M})$
	is isomorphic to $\Crit(\widetilde{w})$
	(see~\cite[Proposition~2.8]{MR3607000}, ~\cite[Section~2.1.1]{TocatDT}), 
	therefore the lemma holds. 
	\end{proof}
\begin{rmk}\label{rmk:truncation}
	We use the fact taking the classical truncation $t_0(-)$ commutes 
	with taking the quotient stack. 
	Indeed let $\mathfrak{Y}$ be a derived scheme with a $G$-action, 
	and $Y=t_0(\mathfrak{Y})$. 
	The 
	quotient stack $[\mathfrak{Y}/G]$ is obtained as a 
	colimit of the simplicial derived scheme that is equal to 
	$G^{\times n} \times \mathfrak{Y}$ in degree $n$. 
	As $t_0(-)$ commutes with taking colimits, see~\cite[Paragraph after Definition~2.2.4.3]{HomII}, 
	we see that $t_0([\mathfrak{Y}/G])=[Y/G]$. 
		\end{rmk}

The above construction is summarized in the following diagram 
\begin{align}\label{dia:-1shift}
	\xymatrix{
\yY \ar@<-0.3ex>@{^{(}->}[r]^-{0} \diasquare& \fF \ar[d] \\
\fM \ar@<-0.3ex>@{^{(}->}[u] \ar@<-0.3ex>@{^{(}->}[r]
& \yY,  \ar@/_10pt/[u]_-{s} 
}	
\quad 
\xymatrix{t_0(\Omega_{\fM}[-1]) \ar[r]^-{\cong} \ar[d] &
\Crit(w) \ar@<-0.3ex>@{^{(}->}[r] \ar[d] & \fF^{\vee} \ar[d] \ar[rd]^-{w} & \\
\fM \ar@{=}[r]& \fM \ar@<-0.3ex>@{^{(}->}[r] & \yY & \mathbb{A}^1. 
}
	\end{align}
Here the left square is a derived Cartesian.

We return to the setting of the previous subsections. 
Let $V$ be a $d$-dimensional vector space. 
We set $Y(d)$ and $\yY(d)$ to be 
\begin{align}\label{def:Yd}
	&Y(d) \cneq \hH om(\eE^0, V \otimes \oO_X) \oplus 
	\hH om(V \otimes \oO_X, \eE^{-1}), \\ 
\notag	&\yY(d) \cneq [Y(d)/\GL_X(V)]. 
	\end{align}
Again we have regarded $Y(d)$ as the total space 
of a vector bundle over $X$. 
For $T \to X$, the $T$-valued points 
of the stack $\yY(d)$ consist of tuples
\begin{align}\label{tuple}
	(\pP, \alpha, \beta), \ \alpha \colon \eE^0_T \to \pP, \ \beta \colon 
	\pP \to \eE_T^{-1}
	\end{align}
where $\pP$ is a locally free sheaf on $T$ of rank $d$. 
Note that the projection 
\begin{align*}
	\yY(d) \to [\hH om(\eE^0, V \otimes \oO_X)/\GL_X(V)]=\mathfrak{C}(\eE^0)
	\end{align*}
identifies $\yY(d)$ 
with the dual vector bundle of (\ref{sec:hom}). 
We define the super-potential 
\begin{align}\label{funct:w}
	w \colon \yY(d) \to \mathbb{A}^1, \ 
	(\pP, \alpha, \beta) \mapsto \langle s(\alpha), \beta \rangle
	=\mathrm{Tr}(\alpha \circ \phi_T \circ \beta). 
\end{align}
Here over the $T$-valued points, 
the 
last expression is given by taking the trace of the composition 
\begin{align*}
	\alpha \circ \phi_T \circ \beta \colon 
	\pP \stackrel{\beta}{\to} \eE_T^{-1} \stackrel{\phi_T}{\to} \eE_T^0 \stackrel{\alpha}{\to}
	\pP. 
\end{align*}
From the diagram (\ref{dia:-1shift}), 
Lemma~\ref{lem:crit:M} 
(applied for $\yY=\mathfrak{C}(\eE^0)$, 
$\fF$ is the vector bundle (\ref{sec:hom})
so that 
$\fF^{\vee}=\yY(d)$, the section $s$ is (\ref{sec:hom2}))
implies that 
we have the isomorphism 
\begin{align}\label{crit:1}
	\Crit(w) \stackrel{\cong}{\to} t_0(\Omega_{\mathfrak{C}(\eE^{\bullet})}[-1]). 
	\end{align}
\begin{rmk}\label{rmk:quiver}
	Let $a=\rank(\eE^0)$ and $b=\rank(\eE^{-1})$, 
	and denote by $Q_{a, b}$ the quiver with two vertices $\{0, 1\}$, 
	the $a$-arrows from $0$ to $1$ and $b$-arrows from $1$ to $0$. 
	We denote by 
	\begin{align*}
		\rR_{Q_{a, b}}(d) \cneq 
		[(V^{\oplus a} \oplus V^{\vee \oplus b})/\GL(V)], 
		\end{align*}
	the moduli stack of representations of 
	$Q_{a, b}$ with dimension vector $(1, d)$ for $d=\dim V$. 
	If $X$ is a point, then $\yY(d)$ is isomorphic to $\rR_{Q_{a, b}}(d)$. 
	In general there is a projection 
	$h \colon \yY(d) \to X$
	whose fiber is isomorphic to 
	$\rR_{Q_{a, b}}(d)$. 
	Moreover $\yY(d) \cong \rR_{Q_{a, b}}(d) \times X$ if 
	$\eE^0$ and $\eE^{-1}$ are free $\oO_X$-modules. 
	\end{rmk}

We have the isomorphism 
\begin{align}\label{isom:dual}
	\yY(d) \stackrel{\cong}{\to}
	\left[\left(\hH om(\eE_1, V^{\vee} \otimes \oO_X) \oplus \hH om(V^{\vee} \otimes \oO_X, \eE_0) \right) /\GL_X(V) \right]
\end{align}
by the correspondence over $T$-valued points
\begin{align*}
	(\pP, \alpha, \beta) 
	\mapsto (\pP^{\vee}, \beta^{\vee}, \alpha^{\vee}), \ \alpha^{\vee}
	 \colon \pP^{\vee} \to (\eE_0)_T, \ \beta^{\vee} \colon 
	 (\eE_1)_T \to \pP^{\vee}. 
	\end{align*}
Under the isomorphism (\ref{isom:dual}), the 
projection 
\begin{align*}
	\yY(d) \to [\hH om(\eE_1, V^{\vee} \otimes \oO_X)/\GL_X(V)]=\mathfrak{C}(\eE_1)
	\end{align*}
identifies the 
 stack $\yY(d)$ with the 
dual vector bundle of (\ref{sec:hom20}). 
Moreover under the isomorphism (\ref{isom:dual}), 
the super-potential (\ref{funct:w}) is also identified with 
\begin{align*}
	w(\alpha, \beta)=\langle s^{\vee}(\beta^{\vee}), \alpha^{\vee}\rangle
	=\mathrm{Tr}(\beta^{\vee}\circ \phi^{\vee} \circ \alpha^{\vee}),
	\end{align*}
where over the $T$-valued points, 
the last expression is the trace for the composition
\begin{align*}
	\beta^{\vee}\circ \phi^{\vee} \circ \alpha^{\vee} \colon 
		\pP^{\vee} \stackrel{\alpha^{\vee}}{\to} (\eE_0)_T
		 \stackrel{\phi^{\vee}}{\to} (\eE_1)_T \stackrel{\beta^{\vee}}{\to}
	\pP^{\vee}.
	\end{align*}
Therefore again by Lemma~\ref{lem:crit:M}, 
we also have the isomorphism 
\begin{align}\label{equiv:crit}
\Crit(w) \stackrel{\cong}{\to}  t_0(\Omega_{\mathfrak{C}(\eE_{\bullet}[1])}[-1]). 
	\end{align}

Let $\chi_0$ be the determinant character of $\GL(V)$
\begin{align}\label{det:chi0}
	\chi_0 \colon \GL(V) \to \C, \ g \mapsto \det g, 
\end{align}
which naturally determines a line bundle on $\yY(d)$, denoted 
by the same symbol $\chi_0$. 
\begin{lem}\label{lem:GIT}
The GIT semistable locus
\begin{align*}
	\yY(d)^{\chi_0 \sss}\subset \yY(d), \ 
	\yY(d)^{\chi_0^{-1}\sss} \subset \yY(d)
\end{align*}
consists of $(\pP, \alpha, \beta)$ in (\ref{tuple}) 
such that $\alpha$ is 
surjective, $\beta^{\vee}$ is surjective, respectively. 
\end{lem}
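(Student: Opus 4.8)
The plan is to reduce the claim to a standard Hilbert–Mumford / King-type computation for representations of the quiver $Q_{a,b}$, using Remark~\ref{rmk:quiver} to pass from the $X$-relative setting to a fiberwise statement. Since GIT stability for the action of $\GL_X(V)$ on the vector bundle $Y(d) \to X$ can be checked on fibers, and since each fiber of $h \colon \yY(d) \to X$ is isomorphic to $\rR_{Q_{a,b}}(d)$, it suffices to analyze the semistable locus of the $\GL(V)$-action on $V^{\oplus a} \oplus V^{\vee \oplus b}$ with respect to the characters $\chi_0 = \det$ and $\chi_0^{-1} = \det^{-1}$. A point of this space is a representation of $Q_{a,b}$ with dimension vector $(1,d)$, i.e.\ a tuple $(\alpha, \beta)$ with $\alpha \colon \mathbb{C} \otimes \mathbb{C}^a \to V$ (equivalently $\alpha \colon (\eE^0)_T \to \pP$ fiberwise) and $\beta \colon V \to \mathbb{C} \otimes \mathbb{C}^b$ (equivalently $\beta \colon \pP \to (\eE^{-1})_T$).

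**Key steps.** First I would invoke King's criterion~\cite{King}: for the character $\chi_0 = \det$ of $\GL(V)$, a representation is $\chi_0$-semistable if and only if every subrepresentation $(V', \alpha', \beta')$ with $V' \subseteq V$ has $\theta$-slope $\le 0$, where $\theta$ is the stability parameter attached to $\chi_0$; since the $\GL(V)$ factor acts with weight one on $V$, the relevant parameter makes subspaces $0 \subsetneq V' \subseteq V$ of the ``$1$-vertex'' contribute negatively unless they are cut out by $\alpha$. Concretely, the only subrepresentations to test are those supported at the vertex carrying $V$, namely subspaces $V' \subseteq V$ with $\alpha(\mathbb{C}^a) \subseteq V'$ (so that $V'$ is a subrepresentation) — wait, rather the subrepresentations with the $1$-dimensional vertex included versus excluded. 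Tracking the sign conventions, $\chi_0$-semistability forces: there is no nonzero proper $V' \subsetneq V$ containing the image of $\alpha$, which is exactly the statement that $\alpha$ is surjective. Dually, for $\chi_0^{-1} = \det^{-1}$ the inequality flips and the relevant subrepresentations are the subspaces $V' \subseteq V$ contained in $\ker \beta$; $\chi_0^{-1}$-semistability forces $\ker\beta = 0$, i.e.\ $\beta$ is injective, equivalently $\beta^\vee$ is surjective. Then I would note that semistability here coincides with stability and that the bundle/relative version follows because the GIT quotient of a $G$-equivariant affine morphism to $X$ is computed fiberwise (linearization by $\chi_0$ pulled back from a point), so the semistable locus is the union over fibers of the fiberwise semistable loci, giving the stated global description via the $T$-valued point formulation in~(\ref{tuple}).

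**Main obstacle.** The genuine content is bookkeeping of the sign/weight conventions: the character $\chi_0 = \det$ corresponds under King's correspondence to a specific stability vector $\theta = (\theta_0, \theta_1)$ on the two vertices of $Q_{a,b}$ (with $\theta_0 \cdot 1 + \theta_1 \cdot d = 0$), and one must verify that with this $\theta$ the destabilizing subrepresentations are precisely the non-surjectivity witnesses for $\alpha$, and not, say, the non-injectivity witnesses for $\beta$. This amounts to checking which of the two vertices the determinant character ``sees'': since $\GL(V)$ acts on $V^{\oplus a}$ by the standard representation and on $V^{\vee \oplus b}$ by its dual, a one-parameter subgroup $\lambda(t) = \mathrm{diag}(t^{w_1}, \dots, t^{w_d})$ pairs with $\chi_0$ via $\sum w_i$, and the Hilbert–Mumford weight of $(\alpha,\beta)$ under $\lambda$ is governed by the filtration of $V$ by the $w_i$, with $\alpha$ contributing when a graded piece fails to be hit and $\beta$ contributing when a graded piece lies in the kernel. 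A short case analysis of the signs of the $w_i$ then isolates surjectivity of $\alpha$ for $\chi_0$ and surjectivity of $\beta^\vee$ for $\chi_0^{-1}$. Once the conventions are pinned down, the rest is routine; I do not expect any geometric difficulty beyond this.
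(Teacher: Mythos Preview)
Your proposal is correct and follows essentially the same route as the paper: reduce to fibers over $X$ and then apply the Hilbert--Mumford/King criterion for the $\GL(V)$-action on $V^{\oplus a}\oplus V^{\vee\oplus b}$. The paper phrases the fiberwise reduction via the $\Theta$-stack version of Hilbert--Mumford (maps $\Theta\to\yY(d)$ land in a single fiber of $\yY(d)\to X$, and surjectivity of $\alpha$ is a fiberwise condition), and then, rather than carrying out the King computation you sketch, simply cites the case $X=\mathrm{pt}$ from~\cite[Lemma~5.1.9]{TocatDT}; your explicit one-parameter-subgroup analysis is exactly what underlies that citation.
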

\begin{proof}
	We only prove the case of $\yY(d)^{\chi_0 \sss}$. 
	By the Hilbert-Mumford criterion 
	in terms of the $\Theta$-stack 
	$\Theta \cneq [\mathbb{A}^1/\mathbb{C}^{\ast}]$
	(see~\cite{Halpinstab}),  
the semistable locus $\yY(d)^{\chi_0 \sss}$
consists of $p \in \yY(d)$ such that 
for any $g \colon \Theta \to \yY(d)$ with 
$g(1)=p$, we have $\wt(g(0)^{\ast}\chi_0) \ge 0$. 
Since $\Theta \to \Spec \mathbb{C}$ is the 
good moduli space for $\Theta$, see~\cite[Example~8.2]{MR3237451}, 
any map $g\colon \Theta \to \yY(d)$
composed with the projection $\yY(d) \to X$ factors through 
$\Theta \to \Spec \mathbb{C}$
by the universal property of the good moduli space, 
see~\cite[Theorem~6.6]{MR3237451}. 
Therefore any map $g \colon \Theta \to \yY(d)$ 
is contained in a fiber of
$\yY(d) \to X$. 
 Moreover $\alpha$ is surjective if and only if $\alpha|_{x}$ is 
surjective for any $x \in X$. Therefore we may assume that $X$ is a point.
In this case, the lemma follows from~\cite[Lemma~5.1.9]{TocatDT}.  
	\end{proof}
Let us take the GIT quotient 
\begin{align*}
	\yY(d) \to Y(d)\ssslash \GL_X(V)\cneq 
	\Spec (h_{\ast}\oO_{Y(d)})^{\GL_X(V)}
	\end{align*}
where $h \colon Y(d) \to X$ is the projection. 
The above morphism is 
a good moduli space morphism for $\yY(d)$ 
in the sense of~\cite{MR3237451}, see~\cite[Theorem~13.2]{MR3237451}.  
We have the commutative diagram 
\begin{align}\label{dia:Y}
	\xymatrix{
\yY(d)^{\chi_0 \sss} \ar[r] \ar[rd]_-{w^+} & Y(d)\ssslash \GL_X(V) \ar[d]_-{w}& \yY(d)^{\chi_0^{-1}\sss} \ar[l] \ar[ld]^-{w^-} \\
& \mathbb{A}^1. &	
}
	\end{align}
\begin{lem}\label{lem:restrict}
The equivalences (\ref{crit:1}), (\ref{equiv:crit}) restrict to isomorphisms 
\begin{align}\label{Crit:Omega}
	\Crit(w^+) \stackrel{\cong}{\to}
	t_0(\Omega_{\mathbf{Quot}_{X, d}(\gG)}[-1]), \ 
	\Crit(w^-) \stackrel{\cong}{\to}
t_0(\Omega_{\mathbf{Quot}_{X, d}(\mathscr{H})}[-1]).	
	\end{align}
\end{lem}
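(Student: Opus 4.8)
The plan is to obtain both isomorphisms in (\ref{Crit:Omega}) by restricting the already-established isomorphisms (\ref{crit:1}) and (\ref{equiv:crit}) along appropriate open substacks, using that the formation of $\Omega_{(-)}[-1]$ and of GIT (semi)stable loci is Zariski-local. First I would record the elementary fact that for an open immersion $\mathfrak{U}\hookrightarrow\fM$ of derived Artin stacks one has $\mathbb{T}_{\mathfrak{U}}\simeq\mathbb{T}_{\fM}|_{\mathfrak{U}}$, hence $\Omega_{\mathfrak{U}}[-1]\simeq\Omega_{\fM}[-1]\times_{\fM}\mathfrak{U}$ and, after classical truncation, $t_0(\Omega_{\mathfrak{U}}[-1])$ is the preimage of $t_0(\mathfrak{U})$ under $t_0(\Omega_{\fM}[-1])\to t_0(\fM)$. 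Applying this with $\fM=\mathfrak{C}(\eE^{\bullet})$ and $\mathfrak{U}=\mathbf{Quot}_{X,d}(\gG)=\Quot_{X,d}(\eE^0)\cap\mathfrak{C}(\eE^{\bullet})$ — which is open in $\mathfrak{C}(\eE^{\bullet})$, since $\Quot_{X,d}(\eE^0)\subset\mathfrak{C}(\eE^0)$ is open by Lemma~\ref{lem:open} and $\mathbf{Quot}_{X,d}(\gG)$ is its preimage under $\mathfrak{C}(\eE^{\bullet})\to\mathfrak{C}(\eE^0)$ — one concludes (also using Lemma~\ref{lem:vdimQUot} to identify $t_0(\mathbf{Quot}_{X,d}(\gG))=\Quot_{X,d}(\gG)$) that $t_0(\Omega_{\mathbf{Quot}_{X,d}(\gG)}[-1])$ is the preimage of the open substack $\Quot_{X,d}(\eE^0)\subset\mathfrak{C}(\eE^0)$ under the composite $t_0(\Omega_{\mathfrak{C}(\eE^{\bullet})}[-1])\to t_0(\mathfrak{C}(\eE^{\bullet}))\to\mathfrak{C}(\eE^0)$.

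Next I would match this with the critical-locus side. By the construction in Lemma~\ref{lem:crit:M}, applied with $\yY=\mathfrak{C}(\eE^0)$, $\fF$ the bundle (\ref{sec:hom}) so that $\fF^{\vee}=\yY(d)$, and $s$ as in (\ref{sec:hom2}), and by the diagram (\ref{dia:-1shift}), the isomorphism (\ref{crit:1}) sits in a commutative square over $\mathfrak{C}(\eE^0)$ relating the bundle projection $\yY(d)\to\mathfrak{C}(\eE^0)$ with $t_0(\Omega_{\mathfrak{C}(\eE^{\bullet})}[-1])\to\mathfrak{C}(\eE^0)$. By Lemma~\ref{lem:open} the open substack $\Quot_{X,d}(\eE^0)\subset\mathfrak{C}(\eE^0)$ is the locus where $\alpha$ is surjective, so its preimage under $\yY(d)\to\mathfrak{C}(\eE^0)$ is, by Lemma~\ref{lem:GIT}, exactly the semistable locus $\yY(d)^{\chi_0\sss}$. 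Restricting (\ref{crit:1}) over $\Quot_{X,d}(\eE^0)$, its source becomes $\Crit(w)\cap\yY(d)^{\chi_0\sss}=\Crit(w^+)$ (critical loci of invariant functions commute with restriction to open substacks, and $w^+$ is the restriction of $w$ to $\yY(d)^{\chi_0\sss}$ as in (\ref{dia:Y})), and its target becomes $t_0(\Omega_{\mathbf{Quot}_{X,d}(\gG)}[-1])$ by the previous paragraph; this is the first isomorphism in (\ref{Crit:Omega}).

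The second isomorphism is proved the same way in the dual picture: the identification (\ref{isom:dual}) presents $\yY(d)$ as the dual of the bundle with section $s^{\vee}$ over $\mathfrak{C}(\eE_1)$, under which the super-potential becomes $\mathrm{Tr}(\beta^{\vee}\circ\phi^{\vee}\circ\alpha^{\vee})$, and (\ref{equiv:crit}) comes from Lemma~\ref{lem:crit:M} applied with $\yY=\mathfrak{C}(\eE_1)$. One restricts along the open substack $\Quot_{X,d}(\eE_1)\subset\mathfrak{C}(\eE_1)$, the locus where $\beta^{\vee}$ is surjective, whose preimage in $\yY(d)$ is $\yY(d)^{\chi_0^{-1}\sss}$ by Lemma~\ref{lem:GIT}; since $\mathbf{Quot}_{X,d}(\mathscr{H})=\Quot_{X,d}(\eE_1)\cap\mathfrak{C}(\eE_{\bullet}[1])$ is the corresponding open substack of $\mathfrak{C}(\eE_{\bullet}[1])$, the identical bookkeeping gives $\Crit(w^-)=\Crit(w)\cap\yY(d)^{\chi_0^{-1}\sss}\xrightarrow{\cong}t_0(\Omega_{\mathbf{Quot}_{X,d}(\mathscr{H})}[-1])$.

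The step requiring the most care — the main obstacle — is the commutativity over $\mathfrak{C}(\eE^0)$ (respectively $\mathfrak{C}(\eE_1)$) of the isomorphism (\ref{crit:1}) (respectively (\ref{equiv:crit})) with the structure maps to the base, since this is precisely what licenses restricting the isomorphism over an open substack of $\mathfrak{C}(\eE^0)$. I would pin this down by unwinding the proof of Lemma~\ref{lem:crit:M}: over $\mathfrak{C}(\eE^0)$ both sides are the classical truncation of $\Spec\Sym$ of a two-term complex $(\mathbb{T}_{\mathfrak{C}(\eE^0)}\xrightarrow{ds}\fF)$ with $\fF$ in degree zero, and under the identification with $\Crit(w)\subset\fF^{\vee}=\yY(d)$ the map down to $\mathfrak{C}(\eE^0)$ is in each case the bundle projection $\fF^{\vee}\to\mathfrak{C}(\eE^0)$, so the square commutes on the nose. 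Everything else is formal: the Zariski-locality of cotangent complexes recorded above, together with the descriptions of $\Quot_{X,d}(\eE^0)$, $\Quot_{X,d}(\eE_1)$ and of $\yY(d)^{\chi_0^{\pm1}\sss}$ already established in Lemmas~\ref{lem:open}, \ref{lem:vdimQUot} and \ref{lem:GIT}.
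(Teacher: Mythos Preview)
Your proposal is correct and follows essentially the same approach as the paper's proof: both identify $\yY(d)^{\chi_0\sss}$ as the preimage of $\Quot_{X,d}(\eE^0)$ in $\yY(d)$ via Lemma~\ref{lem:GIT}, then use that critical loci and $(-1)$-shifted cotangents base-change along open immersions to restrict the isomorphism (\ref{crit:1}). The paper packages these steps as a chain of Cartesian squares, while you phrase them as preimages and Zariski-local restrictions, and you are more explicit about the commutativity over $\mathfrak{C}(\eE^0)$; but the underlying argument is the same.
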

\begin{proof}
	We only prove the first isomorphism. 
	Lemma~\ref{lem:GIT} implies that the following diagram is 
	Cartesian 
	\begin{align}\label{cart:crit0}
		\xymatrix{
		\yY(d)^{\chi_0 \sss} \ar@<-0.3ex>@{^{(}->}[r] \ar[d] \diasquare 
		&
		\yY(d) \ar[d] \\
		\mathrm{Quot}_{X, d}(\eE^0) \ar@<-0.3ex>@{^{(}->}[r]
		& \mathfrak{C}(\eE^0) 
	}
		\end{align}
	where each horizontal arrow is an open immersion. 
	Note that $\mathrm{Crit}(w) \cap \yY(d)^{\chi_0 \sss}=\mathrm{Crit}(w^{+})$
	as $\yY(d)^{\chi_0\sss} \subset \yY(d)$ is an open immersion. 
	Therefore we obtain the 
	Cartesian square 
				\begin{align}\label{cart:critw}
			\xymatrix{
				\Crit(w^{+}) \ar@<-0.3ex>@{^{(}->}[r] \ar[d] \diasquare 
				&
				\Crit(w) \ar[d] \\
				\mathrm{Quot}_{X, d}(\eE^0) \ar@<-0.3ex>@{^{(}->}[r]
				& \mathfrak{C}(\eE^0).  
			}
		\end{align}
	We also have the following Cartesian diagrams 
	from the definition of $(-1)$-shifted cotangents and
	$\mathbf{Quot}_{X, d}(\gG)$
	\begin{align}\label{cart:critw2}
			\xymatrix{
			t_0(\Omega_{\mathbf{Quot}_{X, d}(\gG)}[-1]) \ar@<-0.3ex>@{^{(}->}[r] \ar[d] \diasquare 
			&
			t_0(\Omega_{\mathfrak{C}(\eE^{\bullet})}[-1]) \ar[d] \\
			\mathbf{Quot}_{X, d}(\gG) \ar@<-0.3ex>@{^{(}->}[r]
			& \mathfrak{C}(\eE^{\bullet}), 
		} \quad 
		\xymatrix{
	  \mathbf{Quot}_{X, d}(\gG) \ar@<-0.3ex>@{^{(}->}[r]  \ar@<-0.3ex>@{^{(}->}[d] \diasquare 
		& \mathfrak{C}(\eE^{\bullet})  \ar@<-0.3ex>@{^{(}->}[d] \\
	\mathrm{Quot}_{X, d}(\eE^0) \ar@<-0.3ex>@{^{(}->}[r]
		& \mathfrak{C}(\eE^{0}). 
	}
		\end{align} 
	The lemma follows from the Cartesian squares (\ref{cart:critw}), (\ref{cart:critw2})
	 together 
	with the isomorphism (\ref{crit:1}). 
	\end{proof}
When $X$ is a point, the top row in (\ref{dia:Y}) is 
a Grassmannian flip considered in~\cite[(4.5)]{Toconi}. 
In general, it is a family of Grassmannian 
flips parametrized by $X$. 
\begin{rmk}\label{rmk:dcrit}
	The diagram 
	\begin{align}\notag
		\xymatrix{
		t_0(\Omega_{\mathbf{Quot}_{X, d}(\gG)}[-1]) \ar[rd] & & 
		t_0(\Omega_{\mathbf{Quot}_{X, d}(\mathscr{H})}[-1])
\ar[ld] \\
& Y(d)\ssslash \GL_X(V) &	
}	\end{align}
is a d-critical flip in~\cite{Toddbir}. 
If $\Quot_{X, d}(\gG)$ and $\Quot_{X, d}(\mathscr{H})$ are smooth of expected 
dimensions, 
then the above diagram is identified with 
	\begin{align*}
	\xymatrix{
	\Quot_{X, d}(\gG) \ar[rd] & & \Quot_{X, d}(\mathscr{H})		\ar[ld] \\
		& Y(d)\ssslash \GL_X(V). &	
}	\end{align*}
In general, the above diagram is not necessary a d-critical flip 
since the
relative Quot schemes are not  
necessary written as critical loci. 
	\end{rmk}
\subsection{Koszul duality}
We apply Koszul duality equivalences to relate derived categories of 
relative Quot schemes with triangulated categories of $\C$-equivariant 
factorizations. 
Below we use the convention 
in~\cite[Section~2.2, 2.3]{KoTo}. 

Let $\widetilde{\GL}(V)$ be defined by 
\begin{align*}
	\widetilde{\GL}(V) \cneq \GL(V) \times_{\PGL(V)} \times \GL(V). 
	\end{align*}
There is a natural exact sequence 
\begin{align}\label{char:tau}
	1 \to \GL(V) \stackrel{\Delta}{\to} \widetilde{\GL}(V) 
	\stackrel{\tau}{\to} \C \to 1
	\end{align}
where $\Delta$ is the diagonal embedding, 
and $\tau$ is the character defined by $\tau(g_1, g_2)=g_1 g_2^{-1}$. 
The above exact sequence splits non-canonically. 
Indeed for each $k \in \mathbb{Z}$, 
$t \mapsto (t^k, t^{k-1})$ 
gives a splitting of $\tau$. 
So for each $k \in \mathbb{Z}$, 
there is an isomorphism 
\begin{align}\label{tauk}
	\iota_k \colon 
	\GL(V) \times \C \stackrel{\cong}{\to} \widetilde{\GL}(V), 
	\ (g, t) \mapsto (t^k g, t^{k-1} g). 
	\end{align}
Once we fix a splitting as above, 
giving a $\widetilde{\GL}(V)$-action is equivalent to 
giving a $\GL(V)$-action together with an auxiliary $\C$-action 
which commutes with the above $\GL(V)$-action. 
The $\GL_X(V)$-action on $Y(d)$ over $X$ naturally extends to 
an action of $\widetilde{\GL}_X(V)\cneq \widetilde{\GL}(V) \times X$ over $X$. 
Indeed $\GL_X(V) \times_X \GL_X(V)$ naturally 
acts on $Y(d)$, 
where the first factor of $\GL_X(V) \times_X \GL_X(V)$ acts on 
$\hH om(\eE^0, V \otimes \oO_X)$ and the second factor 
acts on $\hH om(V \otimes \oO_X, \eE^{-1})$, 
and the $\widetilde{\GL}_X(V)$-action is 
given by its restriction. For $k=0$ in (\ref{tauk}), 
the auxiliary $\C$-action 
is given by the weight one action on the second factor of $Y(d)$, 
for $k=1$ it is the weight one action on the first factor of $Y(d)$. 

The triangulated category of 
$\C$-equivariant 
factorizations
\begin{align}\label{MF:cat}
	\MF^{\C}(\yY(d), w)
	\end{align}
is defined to be the category whose objects consist of 
\begin{align}\label{fact:P}
	P_0 \stackrel{f}{\to} P_1 \stackrel{g}{\to} P_0\langle\tau\rangle
	\end{align}
where $P_0$, $P_1$ are $\widetilde{\GL}_X(V)$-equivariant 
coherent sheaves on $Y(d)$, 
$f$, $g$ are $\widetilde{\GL}_X(V)$-equivariant morphisms 
such that $f \circ g=\cdot w$, 
$g \circ f =\cdot w$. 
Here $\langle \tau \rangle$ means the twist by the $\widetilde{\GL}(V)$-character 
$\tau$. The category (\ref{MF:cat}) is defined to be the localization
of the homotopy category of the factorizations (\ref{fact:P})
by its subcategory of acyclic factorizations (see~\cite{MR3366002}). 
The categories $\mathrm{MF}^{\mathbb{C}^{\ast}}(\yY(d)^{\chi_0^{\pm 1} \sss}, w^{\pm})$
are also defined in a similar way. 

We now state 
the Koszul duality equivalence in~\cite[Proposition~4.8]{MR3631231}
(also see~\cite{MR3071664, MR2982435, TocatDT})
in the setting of the diagram (\ref{dia:-1shift}):
\begin{thm}\emph{(\cite{MR3631231, MR3071664, MR2982435, TocatDT})}\label{thm:Kdual}
Let $\yY=[Y/G]$ for a smooth quasi-projective scheme $Y$
and $G$ is an affine algebraic group acting on $Y$. 
Let $\fF \to \yY$ be a vector bundle on it with a section $s$, 
and $\fM$ the derived zero locus of $s$. 
Then there is 
an equivalence 
\begin{align*}
	D^b(\fM) \stackrel{\sim}{\to} \MF^{\C}(\fF^{\vee}, w)
	\end{align*}
where $\C$ acts on fibers of $\fF^{\vee} \to Y$ with weight one, 
and $w$ is the function (\ref{func:w}). 
\end{thm}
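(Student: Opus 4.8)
The plan is to deduce the equivalence from \emph{linear Koszul duality}: realize both categories as module categories over dg-algebras on $\yY$ and match them via the Koszul complex. First I would set up the two module descriptions. Writing $\pi\colon\fF\to\yY$ for the projection, the derived zero locus is the derived fibre product $\fM=\yY\times_{0,\fF,s}\yY$, so $\oO_{\fM}$ is computed by the Koszul dg-algebra
\[
	\kappa\cneq\bigl(\Sym_{\oO_{\yY}}(\fF^{\vee}[1]),\ \iota_s\bigr),
\]
namely the free graded-commutative $\oO_{\yY}$-algebra on $\fF^{\vee}$ placed in cohomological degree $-1$, with differential the contraction against $s\in\Gamma(\yY,\fF)=\Hom_{\oO_{\yY}}(\fF^{\vee},\oO_{\yY})$. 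Since $\fM\to\yY$ is affine, $D^b(\fM)$ is then the triangulated category of dg-$\kappa$-modules on $\yY$ with bounded coherent cohomology. On the other side, $\fF^{\vee}=\Spec_{\yY}\Sym_{\oO_{\yY}}(\fF)$ is affine over $\yY$ and $w$ is precisely the linear-on-fibres function determined by $s\in\Gamma(\yY,\Sym^1\fF)=\Gamma(\yY,\fF)$, so a $\C$-equivariant factorization of $w$ is the same datum as a $\C$-graded curved module over the curved $\oO_{\yY}$-algebra $(\Sym_{\oO_{\yY}}(\fF),\ w)$, the $\C$-weight being the one induced by the weight-one scaling of the fibres of $\fF^{\vee}\to Y$, normalized as in \cite[Section~2.2, 2.3]{KoTo} so that the twist $\langle\tau\rangle$ of \eqref{fact:P} records this weight together with a cohomological shift.

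Second, I would invoke Koszul duality between $\kappa$ and the curved algebra $(\Sym_{\oO_{\yY}}(\fF),w)$: these are Koszul dual over $\oO_{\yY}$ (with the Koszul complex as duality kernel), and Positselski's framework provides an equivalence between a (co)derived category of $\kappa$-modules and a (co/contra)derived category of curved $(\Sym_{\oO_{\yY}}(\fF),w)$-modules. The essential point---and the reason the auxiliary $\C$ is indispensable---is that the extra internal grading kills the obstructions produced by the curvature $w$, so this equivalence restricts to one between the bounded-coherent part $D^b(\fM)$ and the honest factorization category $\MF^{\C}(\fF^{\vee},w)$; concretely the resulting functor is of Fourier--Mukai type along $\fF^{\vee}\leftarrow\fF^{\vee}\times_{\yY}\fM\to\fM$ with the tautological Koszul factorization as kernel. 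Rather than reprove this from scratch I would check that the data $(\yY,\fF,s)$ satisfy the hypotheses of the cited Isik--Shipman type theorems in the gauged setting ($\yY=[Y/G]$ a smooth quotient stack with $G$ affine, $\fF$ a vector bundle; properness is not needed) and quote \cite[Proposition~4.8]{MR3631231}, cf.\ \cite{MR3071664, MR2982435, TocatDT}, with the $\C$ there identified with the fibrewise scaling on $\fF^{\vee}$.

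The only genuinely non-formal points are the bookkeeping forced by the gauged/stacky setting. First, the passage from the scheme $Y$ to $\yY=[Y/G]$: since every construction above is $G$-equivariant one may either run the whole argument $G$-equivariantly or descend the scheme-level equivalence along $Y\to\yY$, and in either case one must verify that the boundedness and coherence conditions defining $D^b(-)$ and $\MF^{\C}(-,-)$ are preserved (and, in the intended application where $\yY$ lies over $X$, that the functor is $\oO_X$-linear, which is automatic as everything in sight is $X$-linear). Second, one must pin down the shifts, signs and weights: identifying the auxiliary $\C$ with the weight-one scaling on the fibres of $\fF^{\vee}\to Y$, and the twist $\langle\tau\rangle$ with the shift $[2]$ combined with that weight, is fiddly but routine once the conventions of \cite{KoTo} are imposed. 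I expect the curvature-and-convergence issue in the second step to be the main conceptual obstacle, and the weight-and-shift bookkeeping to be the main technical nuisance; everything else is formal Koszul-duality machinery.
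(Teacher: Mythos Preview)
Your proposal is correct and, in essence, coincides with what the paper does: Theorem~\ref{thm:Kdual} is not proved in the paper at all but is simply quoted from the literature as \cite[Proposition~4.8]{MR3631231} (with \cite{MR3071664, MR2982435, TocatDT} as further references), and your sketch ends by invoking exactly the same citation. The additional content you provide---the Koszul dg-algebra description of $\oO_{\fM}$, the curved-module interpretation of $\MF^{\C}(\fF^{\vee},w)$, and the Positselski-style duality between them---is a faithful outline of how those cited results are proved, so there is no discrepancy to report.
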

By applying Theorem~\ref{thm:Kdual}, we obtain the following: 
\begin{prop}\label{prop:KdQuot}
We have the equivalences 
\begin{align}\label{equiv:quot}
&D^b(\mathbf{Quot}_{X, d}(\gG)) \stackrel{\sim}{\to}
\MF^{\C}(\yY(d)^{\chi_0 \sss}, w^+), \\ 
	&\notag D^b(\mathbf{Quot}_{X, d}(\mathscr{H})) \stackrel{\sim}{\to}
	\MF^{\C}(\yY(d)^{\chi_0^{-1} \sss}, w^-). 
	\end{align}
\end{prop}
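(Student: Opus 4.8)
The plan is to derive both equivalences directly from Theorem~\ref{thm:Kdual}, applied to the two GIT-semistable opens of $\yY(d)$, after checking that the derived zero loci on those opens coincide with $\mathbf{Quot}_{X,d}(\gG)$ and $\mathbf{Quot}_{X,d}(\mathscr{H})$ respectively. I would treat only the first equivalence in detail, the second being entirely parallel via the isomorphism $(\ref{isom:dual})$ which identifies $\yY(d)$, the super-potential $w$, and the characters $\chi_0 \leftrightarrow \chi_0^{-1}$ in a way compatible with everything below; I would remark on this symmetry rather than repeat the argument.

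First I would set up the input to Theorem~\ref{thm:Kdual}. Take $\yY = \mathrm{Quot}_{X,d}(\eE^0) \subset \mathfrak{C}(\eE^0)$, which by Lemma~\ref{lem:open} is a quotient stack $[Y/G]$ with $Y$ smooth quasi-projective and $G = \GL(V) \times X \to X$ affine; take $\fF$ to be the restriction to $\mathrm{Quot}_{X,d}(\eE^0)$ of the vector bundle in $(\ref{sec:hom})$, with the section $s$ from $(\ref{sec:hom2})$. Then by definition the derived zero locus of $s$ on $\mathrm{Quot}_{X,d}(\eE^0)$ is $\mathrm{Quot}_{X,d}(\eE^0) \cap \mathfrak{C}(\eE^{\bullet}) = \mathbf{Quot}_{X,d}(\gG)$. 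The dual bundle $\fF^{\vee}$ is, by Lemma~\ref{lem:GIT} and the Cartesian square $(\ref{cart:crit0})$, exactly the semistable locus $\yY(d)^{\chi_0\sss}$: the projection $\yY(d) \to \mathfrak{C}(\eE^0)$ identifies $\yY(d)$ with the dual of $(\ref{sec:hom})$, and restricting to the open where $\alpha$ is surjective gives $\fF^{\vee} = \yY(d)^{\chi_0\sss}$ over $\yY = \mathrm{Quot}_{X,d}(\eE^0)$. Under this identification the function $w$ of $(\ref{func:w})$ restricts to $w^+$ in $(\ref{dia:Y})$, and the weight-one $\C$-action on the fibers of $\fF^{\vee}$ is precisely the auxiliary $\C$-action inside $\widetilde{\GL}_X(V)$ (the $k=0$ splitting in $(\ref{tauk})$), so that $\MF^{\C}(\fF^{\vee},w)$ is the category $\MF^{\C}(\yY(d)^{\chi_0\sss}, w^+)$ as defined after $(\ref{fact:P})$. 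Theorem~\ref{thm:Kdual} then yields $D^b(\mathbf{Quot}_{X,d}(\gG)) \xrightarrow{\sim} \MF^{\C}(\yY(d)^{\chi_0\sss}, w^+)$.

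For the second equivalence I would apply the same argument with $\eE^0$ replaced by $\eE_1$, $V$ by $V^{\vee}$, $\phi$ by $\phi^{\vee}$, and the section $s^{\vee}$ of $(\ref{sec:hom2})$; then the derived zero locus on $\mathrm{Quot}_{X,d}(\eE_1)$ is $\mathbf{Quot}_{X,d}(\mathscr{H})$ by $(\ref{Quot:svee})$, the dual bundle is $\yY(d)^{\chi_0^{-1}\sss}$ by Lemma~\ref{lem:GIT} (the $\beta^{\vee}$-surjective locus, via the isomorphism $(\ref{isom:dual})$), the super-potential restricts to $w^-$, and the auxiliary $\C$-action is the $k=1$ splitting in $(\ref{tauk})$. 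Theorem~\ref{thm:Kdual} then gives the second line of $(\ref{equiv:quot})$.

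The only genuine point requiring care — the main obstacle — is verifying the compatibility of the $\C$-equivariant structure: one must check that the weight-one $\C$-action on the fibers of $\fF^{\vee}$ demanded by Theorem~\ref{thm:Kdual}, together with the $\GL_X(V)$-action on the base, assembles to exactly the $\widetilde{\GL}_X(V)$-action on $\yY(d)$ described after $(\ref{tauk})$ (with the stated choice of splitting), so that the localized category of $\widetilde{\GL}_X(V)$-equivariant factorizations with the grading twist $\langle\tau\rangle$ matches what Theorem~\ref{thm:Kdual} produces; the twist by $\tau$ on the factorization side corresponds under $\iota_k$ to the weight-one $\C$-twist, and one should record that the resolution asymmetry noted in Remark~\ref{rmk:kerphi} (i.e. $\Ker\phi^{\vee}\neq 0$) does not interfere, since Theorem~\ref{thm:Kdual} is insensitive to it just as Lemma~\ref{lem:vdimQUot} was. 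All remaining verifications — smoothness and quasi-projectivity of the relevant $Y$, affineness of $G$, and that open immersions are preserved by passing to duals and critical loci — are immediate from Lemma~\ref{lem:open}, Lemma~\ref{lem:GIT}, and the Cartesian squares in the proof of Lemma~\ref{lem:restrict}.
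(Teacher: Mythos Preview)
Your proposal is correct and follows essentially the same approach as the paper: apply Theorem~\ref{thm:Kdual} with $\yY=\mathrm{Quot}_{X,d}(\eE^0)$ and $\fF$ the restriction of the bundle (\ref{sec:hom}), identify $\fF^{\vee}$ with $\yY(d)^{\chi_0\sss}$ via the Cartesian square (\ref{cart:crit0}), use the $k=0$ splitting of (\ref{tauk}) for the auxiliary $\C$-action, and treat the second equivalence symmetrically with the $k=1$ splitting. Your version is more explicit about the compatibility checks for the $\widetilde{\GL}_X(V)$-equivariant structure, but there is no substantive difference in strategy.
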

\begin{proof}
	We apply Theorem~\ref{thm:Kdual} for 
	$\mathcal{Y}=\mathrm{Quot}_{X, d}(\eE^0)$ and 
	the vector bundle $\fF \to \yY$ with section $s$ 
	given by the 
	pull-back of (\ref{sec:hom}) by the open immersion $\mathrm{Quot}_{X, d}(\eE^0) \subset \mathfrak{C}(\eE^0)$. 
	Then from the Cartesian square (\ref{cart:crit0}), 
	we obtain the first equivalence in (\ref{equiv:quot})
	by Theorem~\ref{thm:Kdual}. 
	Here we have used the choice of splitting (\ref{tauk}) 
	for $k=0$ in order to specify the auxiliary $\mathbb{C}^{\ast}$-action. 
		The second equivalence in (\ref{equiv:quot})
	 is similarly proved using another splitting (\ref{tauk}) for $k=1$. 
\end{proof}
\subsection{Window subcategories}
We fix a basis of $V$ and a Borel subgroup $B \subset \GL(V)$ 
to be consisting of upper triangular matrices, 
and set roots of $B$ to be negative roots. 
Let $M=\mathbb{Z}^d$ be the character lattice for $\GL(V)$, 
and $M^+_{\mathbb{R}} \subset M_{\mathbb{R}}$ the dominant chamber. 
By the above choice of negative roots, we have 
\begin{align*}
	M_{\mathbb{R}}^+=\{(x_1, x_2, \ldots, x_d) \in \mathbb{R}^d : 
	x_1 \le x_2 \le \cdots \le x_d\}. 
\end{align*} 
We set $M^+ \cneq M_{\mathbb{R}}^+ \cap M$. 
For $c \in \mathbb{Z}$, we set 
\begin{align}\label{Bcd}
	\mathbb{B}_{c}(d) \cneq \{(x_1, x_2, \ldots, x_d) \in M^+ : 
	0 \le x_i \le c-d\}. 
\end{align}
Here $\mathbb{B}_c(d)=\emptyset$ if $c<d$.
\begin{rmk}
For $\chi \in \mathbb{B}_c(d)$, we have the associated Young diagram 
whose number of boxes at the $i$-th row is $x_{d-i+1}$.
The above assignment 
identifies $\mathbb{B}_c(d)$ with the set of Young diagrams 
with height less than or equal to $d$, 
width less than or equal to $c-d$. 
For example, the following 
picture illustrates the case of 
$(2, 5, 5, 8) \in \mathbb{B}_{c}(d)$ for $d=4$
and $c\ge 12$:
\begin{figure}[H]
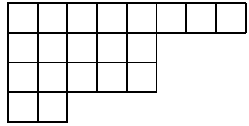
\caption{$(2, 5, 5, 8) \in \mathbb{B}_{c}(d), d=4, c\ge 12$}
	\begin{align*}
		\yng(8,5,5,2)
	\end{align*}
\end{figure}
\end{rmk}
 
By fixing a splitting (\ref{tauk}), 
we define the triangulated subcategory 
\begin{align}\label{subcat:W}
	\mathbb{W}_c(d) \subset \MF^{\C}(\yY(d), w)
	\end{align}
to be split generated by  
factorizations whose entries are 
of the form 
$V(\chi) \otimes_{\oO_X} \pP\langle \tau^i \rangle$
for $\chi \in \mathbb{B}_c(d)$, $i \in \mathbb{Z}$
and $\pP \in D^b(X)$. 
Here $V(\chi)$ is the irreducible $\GL(V)$-representation 
with highest weight $\chi$ 
(i.e. the Schur power of $V$ 
associated with the Young diagram determined by $\chi$), 
and
 $\tau \colon \widetilde{\GL}(V) \to \C$ is the character in (\ref{char:tau}). 
 Note that the subcategory (\ref{subcat:W}) does not depend on a choice of a splitting (\ref{tauk}), 
 since a different splitting only affects on $V(\chi) \otimes_{\oO_X}\pP\langle \tau^i \rangle$
 by a power of $\tau$. 
We also set
\begin{align}\label{abdelta}
	a \cneq \rank(\eE^0), \ b \cneq \rank(\eE^{-1}), \ 
	\delta=a-b. 
	\end{align}
\begin{lem}\label{lem:window}
	The following compositions are equivalences
	\begin{align*}
		&\mathbb{W}_a(d) \subset
		\MF^{\C}(\yY(d), w) \twoheadrightarrow 
		\MF^{\C}(\yY(d)^{\chi_0 \sss}, w^+), \\
		&\mathbb{W}_b(d) \subset
		\MF^{\C}(\yY(d), w) \twoheadrightarrow 
		\MF^{\C}(\yY(d)^{\chi_0^{-1} \sss}, w^-). 
		\end{align*}	
	\end{lem}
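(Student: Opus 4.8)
The plan is to reduce both statements to the standard ``window theorem'' for GIT quotients applied to families of Grassmannian-type representations, exactly as in the absolute case treated in~\cite{Toconi}. First I would recall that $\yY(d) \to X$ is, Zariski-locally on $X$ where $\eE^0$ and $\eE^{-1}$ trivialize, isomorphic to $\rR_{Q_{a,b}}(d) \times X$ by Remark~\ref{rmk:quiver}, so that the $\GL_X(V)$-action and the linearization $\chi_0$ are the pull-back of the corresponding data for the quiver moduli stack. Since the window subcategory $\mathbb{W}_c(d)$, the potential $w$, and the semistable loci $\yY(d)^{\chi_0^{\pm 1}\sss}$ are all defined $X$-locally and are compatible with base change on $X$, the assertion is \'etale-local on $X$; hence it suffices to prove it after tensoring $D^b(X)$-coefficients into the absolute statement over a point.

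Next I would invoke the magic windows / GIT window equivalence of Halpern-Leistner and of Ballard--Favero--Katzarkov (in the form used in~\cite{Toconi} for the conifold-type quiver): for the one-parameter subgroups $\lambda(t) = \mathrm{diag}(t,\dots,t)$ realizing the two $\Theta$-strata for $\chi_0$ and $\chi_0^{-1}$, the relevant ``$\eta$-weights'' of the normal bundles to the unstable strata, computed on the fixed loci, give precisely the window width: for $\chi_0$ the bound is governed by $a = \rank(\eE^0)$ (the number of arrows out of the small vertex), and for $\chi_0^{-1}$ by $b = \rank(\eE^{-1})$. One checks that the set $\mathbb{B}_c(d)$ of~(\ref{Bcd}) is exactly the set of dominant weights $\chi$ of $\GL(V)$ with $\langle \lambda, \chi \rangle$ lying in the half-open interval of length $c$ dictated by the Kempf--Ness / Teleman quantization inequalities, so that the $V(\chi)\otimes_{\oO_X}\pP\langle\tau^i\rangle$ span a set of generators satisfying the ``grade-restriction'' condition. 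The window theorem then yields that restriction to the semistable locus is fully faithful and essentially surjective onto $\MF^{\C}$ of the GIT quotient; the $\C$-equivariance (the $\tau$-grading) and the presence of the potential $w$ are handled as in~\cite[Section~4]{Toconi} by noting that $w$ is $\widetilde{\GL}_X(V)$-invariant and the window is stable under the Koszul differential, so the factorization version follows from the underived version by a standard d\'evissage along the two-periodic complex.

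The main obstacle I expect is book-keeping the auxiliary $\C$-action and the twist by $\tau$: the two equivalences use the two different splittings~(\ref{tauk}) ($k=0$ versus $k=1$), and one must check that the window $\mathbb{W}_c(d)$ — which by the remark after~(\ref{subcat:W}) is independent of the splitting — really does land inside $\MF^{\C}(\yY(d)^{\chi_0^{\pm1}\sss},w^{\pm})$ with the correct grading so that the composition with the Verdier quotient $\MF^{\C}(\yY(d),w)\twoheadrightarrow\MF^{\C}(\yY(d)^{\chi_0^{\pm1}\sss},w^{\pm})$ is an equivalence rather than merely fully faithful on one side. Concretely, fullness and faithfulness come from the vanishing of $\RHom$ between a window generator and the kernel of the quotient (objects supported on the unstable locus), which is the weight estimate above; essential surjectivity comes from the fact that every object of the semistable category lifts, after applying enough of Teleman's window-shift functors, into the window. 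I would carry this out once for the $\chi_0$-side in detail and remark that the $\chi_0^{-1}$-side is identical after the isomorphism~(\ref{isom:dual}) swapping the roles of $(\eE^0,\eE^{-1})$ and $(\eE_1,\eE_0)$, which interchanges $a\leftrightarrow b$ and $\chi_0\leftrightarrow\chi_0^{-1}$ and hence $\mathbb{W}_a(d)\leftrightarrow\mathbb{W}_b(d)$.
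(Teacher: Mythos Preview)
Your overall strategy matches the paper's: establish the window equivalence first in the underived setting (no potential, no auxiliary $\C$-action) by reducing to the case $X=\mathrm{point}$ treated in~\cite[Proposition~4.3]{Toconi}, then add the $\C$-grading verbatim, then pass to factorizations via a general result (the paper cites~\cite[Proposition~2.1]{Tudor1.5}). The symmetry you invoke via (\ref{isom:dual}) to swap $a\leftrightarrow b$ is also how the paper treats the second equivalence.

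The point where your argument diverges from the paper, and where it is under-justified, is the globalization over $X$. You assert that ``the assertion is \'etale-local on $X$'' and reduce to the free case in one step. But an equivalence of triangulated subcategories is not an obviously local statement: the window subcategory $\mathbb{W}_c(d)$ is \emph{globally} defined as the split closure of certain generators, and you have not explained why its restriction to a Zariski cover recovers the local window subcategories, nor why the equivalences glue. The paper avoids any descent argument by treating full faithfulness and essential surjectivity separately. For full faithfulness it writes the relevant $\Hom$ as $\dR\Gamma$ of a sheaf morphism on $[X/\GL_X(V)]$ (using the affine-bundle projection $\pi\colon\yY(d)\to[X/\GL_X(V)]$), checks this sheaf morphism is an isomorphism Zariski-locally where $\eE^i$ are free (reducing to the $\boxtimes$-product with the point case), and concludes globally. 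For essential surjectivity it does \emph{not} localize at all: it runs the argument of~\cite[Proposition~4.3]{Toconi} globally by replacing Kapranov's exceptional collection on the Grassmannian with the \emph{relative} exceptional collection for Grassmannian bundles from~\cite[Theorem~3.70]{JiangQuot}. Your ``apply Teleman's window-shift functors'' would need a similar relative upgrade to work over a non-trivial base. This is not a fatal flaw, but as written your local-to-global step is a genuine gap that the paper fills by a more hands-on route.
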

\begin{proof}
	We only prove the first equivalence. 
	The lemma is proved in~\cite[Proposition~4.3]{Toconi} when $X$ is a point and 
	there is no super-potential and an auxiliary $\C$-action, i.e. $D^b(\yY(d))$ instead 
	of $\MF^{\C}(\yY(d), w)$. 
	Namely let $\mathbb{W}_a'(d) \subset D^b(\yY(d))$ be the triangulated subcategory 
	generated by $V(\chi) \otimes_{\oO_X}\pP$ for $\chi \in \mathbb{B}_a(d)$
	and $\pP \in D^b(X)$. 
	If $X$ is a point, then the composition functor 
	\begin{align}\label{equiv:Wa}
		\mathbb{W}_a'(d) \subset D^b(\yY(d)) \twoheadrightarrow D^b(\yY^{\chi_0 \sss}(d))
		\end{align}
	is an equivalence by~\cite[Proposition~4.3]{Toconi}. 
	If $\eE^i$ are free $\oO_X$-modules so that 
	$\yY(d) \cong \rR_{Q_{a, b}}(d) \times X$
	(see Remark~\ref{rmk:quiver} for the notation $\rR_{Q_{a, b}}(d)$), then 
	(\ref{equiv:Wa}) is an equivalence by 
	taking the $\boxtimes$-product of the
equivalence (\ref{equiv:Wa}) in the case of $\yY(d)=\rR_{Q_{a, b}}(d)$
with $D^b(X)$. 
	For a general $X$, let 
	us take the factorization 
	\begin{align*}
		\yY(d) \stackrel{\pi}{\to} [X/\GL_X(V)] \to X
		\end{align*}
	where $\GL_X(V)$ acts on $X$ trivially (so $[X/\GL_X(V)]=X \times B\GL(V)$), and 
	$\pi$ is a natural morphism 
	induced by the projection 
	$\hH om(\eE_1, V^{\vee} \otimes \oO_X) \oplus \hH om(V^{\vee} \otimes \oO_X, \eE_0) \to X$, 	
	which is an affine space bundle.
	For $\chi, \chi' \in \mathbb{B}_a(d)$ and 
	$\pP, \pP' \in D^b(X)$, 
	we have the natural morphism in $D_{\rm{qcoh}}([X/\GL_X(V)])$
	\begin{align}\label{mor:GLX}
		&\hH om_{[X/\GL_X(V)]}(V(\chi) \otimes \pP, V(\chi') \otimes \pP' \otimes \pi_{\ast}\oO_{\yY(d)}) \\ &\notag \to 
		\hH om_{[X/\GL_X(V)]}(V(\chi) \otimes \pP, V(\chi') \otimes \pP' \otimes \pi_{\ast}\oO_{\yY(d)^{\chi_0 \sss}}).
				\end{align}
			For a Zariski open subset $U \subset X$, we write 
			$\yY(d)_{U} :=\pi^{-1}([U/\GL_U(V)])$
			and $\pi_U \colon \yY(d)_{U} \to [U/\GL_U(V)]$ the 
			restriction of $\pi$ to $\yY(d)_U$. 
			Then we have 
			\begin{align*}
				&\dR \Gamma([U/\GL_U(V)], \hH om_{[X/\GL_X(V)]}(V(\chi) \otimes \pP, V(\chi') \otimes \pP' \otimes \pi_{\ast}\oO_{\yY(d)})) \\
				&=\RHom_{\yY(d)_U}(\pi_U^{\ast}(V(\chi) \otimes \pP|_{U}), 
				\pi_U^{\ast}(V(\chi') \otimes \pP'|_{U})), \\
				&\dR \Gamma([U/\GL_U(V)], \hH om_{[X/\GL_X(V)]}(V(\chi) \otimes \pP, V(\chi') \otimes \pP' \otimes \pi_{\ast}\oO_{\yY(d)^{\chi_0 \sss}})) \\
				&=\RHom_{\yY(d)^{\chi_0 \sss}_U}(\pi_U^{\ast}(V(\chi) \otimes \pP|_{U})|_{\yY(d)^{\chi_0 \sss}_U}), 
				\pi_U^{\ast}(V(\chi') \otimes \pP'|_{U})|_{\yY(d)^{\chi_0 \sss}_U}). 				
				\end{align*}
			Therefore the morphism (\ref{mor:GLX}) is
			 an isomorphism Zariski locally on $X$
			(by the equivalence (\ref{equiv:Wa}) when $\eE^i$ are free),
			hence (\ref{mor:GLX}) is an isomorphism. By taking 
			$\dR \Gamma([X/\GL_X(V)], -)$ of the isomorphism (\ref{mor:GLX}), 
			we see that the functor (\ref{equiv:Wa}) is fully-faithful. 
			For the essential surjectivity of (\ref{equiv:Wa}), 
			we modify the argument of~\cite[Proposition~4.3]{Toconi}
			by replacing Kapranov exceptional collections on Grassmannians 
			with relative exceptional collections of Grassmannian bundles in~\cite[Theorem~3.70]{JiangQuot}. 
			
		The above
	argument applies verbatim with an auxiliary $\C$-action. 
	Namely let $\C$ acts on $\yY(d)$ with weight one on the second factor, 
	and $\mathbb{W}_a''(d) \subset D^b([\yY(d)/\C])$ the triangulated subcategory generated 
	by $V(\chi) \otimes_{\oO_X}\pP\langle \tau^i \rangle$
	for $\chi \in \mathbb{B}_a(d)$, $\pP \in D^b(X)$ and $i \in \mathbb{Z}$ where $\tau$ is the 
	weight one character for the auxiliary $\C$-action. 
	Then the composition functor 
	\begin{align}\notag
		\mathbb{W}_a''(d) \subset D^b([\yY(d)/\C]) \twoheadrightarrow D^b([\yY(d)^{\chi_0 \sss}/\C])
	\end{align}
is an equivalence. 
	Then
	the lemma holds by applying the super-potential $w$ to the above 
	equivalence
		 	(e.g.~applying~\cite[Proposition~2.1]{Tudor1.5} for $\xX=[\yY(d)/\mathbb{C}^{\ast}]$, 
		 	$I=\{1\}$, $\aA_1=\mathbb{W}_a''(d)$). 	
	\end{proof}

\subsection{Categorified Hall products}
For a one parameter subgroup $\lambda \colon \C \to \GL(V)$, 
let $V^{\lambda \ge 0} \subset V$ be the subspace of non-negative $\lambda$-weights, 
and $V^{\lambda=0} \subset V$ the $\lambda$-fixed subspace. 
We have the Levi and parabolic subgroups 
\begin{align*}
	\GL(V)^{\lambda=0} \subset \GL(V)^{\lambda \ge 0} \subset \GL(V)
	\end{align*}
where $\GL(V)^{\lambda=0}$ is the centralizer of $\lambda$
and $\GL(V)^{\lambda \ge 0}$ is the subgroup of $g \in \GL(V)$
such that there is a limit of $\lambda(t) g \lambda(t)^{-1} \in \GL(V)$ for $t \to 0$. 
We set 
\begin{align*}
&\yY(d)^{\lambda \ge 0} \cneq \left[\left(\hH om(\eE^0, V^{\lambda \ge 0}\otimes \oO_X) \oplus 
\hH om(V^{\lambda \le 0} \otimes \oO_X, \eE^{-1})\right)/\GL_X(V)^{\lambda \ge 0}    \right] \\
&\yY(d)^{\lambda=0} \cneq \left[\left(\hH om(\eE^0, V^{\lambda=0}\otimes \oO_X) \oplus 
\hH om(V^{\lambda = 0} \otimes \oO_X, \eE^{-1})\right)/\GL_X(V)^{\lambda=0}    \right]. 
	\end{align*}
Here the right hand sides make sense since the $\GL(V)$-action on $V$
restricts to the $\GL(V)^{\lambda \ge 0}$-action on $V^{\lambda \ge 0}$. 
We have the following diagram 
\begin{align}\label{dia:loci}
	\xymatrix{
\yY(d)^{\lambda \ge 0} \ar[d]_-{q_{\lambda}} \ar[r]^-{p_{\lambda}}
\ar[rd]^-{w^{\lambda \ge 0}} & \yY(d) \ar[d]^-{w} \\
\yY(d)^{\lambda =0} \ar[r]_{w^{\lambda=0}} & \mathbb{A}^1. 
}
	\end{align}
Here $p_{\lambda}$ is induced by 
the natural inclusion $V^{\lambda \ge 0} \subset V$
and surjection $V \twoheadrightarrow V^{\lambda \le 0}$, 
and $q_{\lambda}$ is given by 
taking the $t\to 0$ limit of the $\lambda$-action. 
\begin{rmk}\label{rmk:loci}
	The morphisms $p_{\lambda}$, $q_{\lambda}$ are morphisms of algebraic 
	stacks. Indeed the diagram 
	$\yY(d)^{\lambda=0} \leftarrow \yY(d)^{\lambda \ge 0} \to \yY(d)$
	is identified with some components of the diagram 
\begin{align*}
	\mathrm{Map}(B\mathbb{C}^{\ast}, \yY(d)) \leftarrow 
	\mathrm{Map}(\Theta, \yY(d)) \to \yY(d)
	\end{align*}
where $\Theta=[\mathbb{A}^1/\mathbb{C}^{\ast}]$, and the 
above arrows are induced by $\{0\}/\mathbb{C}^{\ast} \in \Theta$, 
$1 \in \Theta$, respectively
(see~\cite[Theorem~1.4.8]{Halpinstab}). 
	\end{rmk}
In the diagram (\ref{dia:loci}), 
the function $w^{\lambda \ge 0}$ is 
a defined to be the pull-back of $w$ by $p_{\lambda}$, which 
uniquely descends to a function $w^{\lambda=0}$. 
Since $p_{\lambda}$ is proper (as any fiber of $p_{\lambda}$ is 
a closed subscheme of the partial flag variety $\GL(V)/\GL(V)^{\lambda \ge 0}$), 
the following functor is well-defined 
\begin{align}\label{funct:p}
	p_{\lambda \ast}q_{\lambda}^{\ast}
	\colon \MF^{\C}(\yY(d)^{\lambda=0}, w^{\lambda=0}) \to 
	\MF^{\C}(\yY(d), w). 
	\end{align}
See~\cite[Section~3]{MR3270588} for the above functors of 
the categories of factorizations. 

We take the following special choice for $\lambda$
\begin{align*}
	\lambda(t)=(t, 1, \ldots, 1). 
	\end{align*}
Then $\dim V^{\lambda=0}=d-1$ and 
$\GL(V)^{\lambda=0}=\C \times \GL(V^{\lambda=0})$, so that we have 
\begin{align*}
	\yY(d)^{\lambda=0}=B\C \times \yY(d-1). 
	\end{align*}
We have the decomposition 
\begin{align*}
	\MF^{\C}(\yY(d)^{\lambda=0}, w^{\lambda=0})
	=\bigoplus_{j\in \mathbb{Z}}\oO_{B\C}(j) \boxtimes \MF^{\C}(\yY(d-1), w)
	\end{align*}
where
$\oO_{B\C}(j)$ is the $\C$-representation of weight $j$, 
and each direct summand is equivalent to 
$\MF^{\C}(\yY(d-1), w)$. 

It is easy to see that, when $X$ is a point, 
the stack $\yY(d)^{\lambda \ge 0}$ is isomorphic to 
the moduli stack of short exact sequences 
of $Q_{a, b}$-representations (see Remark~\ref{rmk:quiver})
\begin{align*}
	0 \to R'' \to R \to R' \to 0
	\end{align*}
where $R$ has dimension vector $(1, d)$
and $R''$ has dimension vector $(0, 1)$. It is straightforward 
to extend the above statement 
for an arbitrary $X$. 
Here we give some more details: 
\begin{lem}\label{lem:dia:P}
For $T \to X$, the $T$-valued points of the stack 
$\yY(d)^{\lambda \ge 0}$ consist of diagrams 
\begin{align}\label{ex:Tvalue}
	\xymatrix{
&0 \ar[r] \ar[d] & \eE_T^0 \ar@{=}[r] \ar[d]^-{\alpha} & \eE_T^0\ar[d]^-{\alpha'} & \\
0 \ar[r] &\pP'' \ar[r] \ar@{=}[d] & \pP \ar[r] \ar[d]^-{\beta} & \pP' \ar[d]^-{\beta'}
\ar[r] & 0 \\
&0 \ar[r] & \eE_T^{-1} \ar@{=}[r]  & \eE_T^{-1} &
}
	\end{align}
where the middle horizontal sequence is an exact sequence of vector bundles on $T$
such that $\rank(\pP'')=1$, $\rank(\pP')=d-1$. 
The morphisms $p_{\lambda}$, $q_{\lambda}$ sends the above diagram to 
$(\pP, \alpha, \beta)$, $(\pP'', (\pP', \alpha',  \beta'))$ respectively. 
\end{lem}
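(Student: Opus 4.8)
The plan is to unwind both sides of the claimed identification through the moduli description of $\yY(d)^{\lambda \ge 0}$ as a $\Theta$-mapping stack, following Remark~\ref{rmk:loci}. First I would recall that $\mathrm{Map}(\Theta, \yY(d))$ parametrizes, for each test scheme $T \to X$, a $\mathbb{G}_m$-equivariant map from $\mathbb{A}^1_T$ to $Y(d)$ together with the torsor data, which by the standard dictionary (see~\cite[Theorem~1.4.8]{Halpinstab}) is the same as a $\mathbb{Z}$-filtered object of $\yY(d)$, i.e. a vector bundle $\pP$ of rank $d$ on $T$ equipped with a descending filtration, together with maps $\alpha \colon \eE^0_T \to \pP$ and $\beta \colon \pP \to \eE^{-1}_T$. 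Restricting to the component where the one-parameter subgroup is $\lambda(t) = (t, 1, \ldots, 1)$, so that the associated graded has dimension vector $(1, d-1)$ with the weight-one piece being the rank-one quotient, this filtration is precisely a short exact sequence $0 \to \pP'' \to \pP \to \pP' \to 0$ with $\rank \pP'' = 1$, $\rank \pP' = d-1$.

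Next I would check the compatibility of $\alpha$ and $\beta$ with the filtration, which is where the weight conventions in the definitions of $\yY(d)^{\lambda \ge 0}$ and $\yY(d)^{\lambda=0}$ enter. The factor $\hH om(\eE^0, V^{\lambda \ge 0} \otimes \oO_X)$ records maps $\eE^0_T \to \pP$ landing in the non-negative weight part; since $V^{\lambda \ge 0} = V$ here (the weights are $1$ and $0$), $\alpha$ is unconstrained, and it induces $\alpha' \colon \eE^0_T \to \pP'$ by composing with $\pP \twoheadrightarrow \pP'$, giving the top two rows of~(\ref{ex:Tvalue}). Dually, the factor $\hH om(V^{\lambda \le 0} \otimes \oO_X, \eE^{-1})$ records maps $\pP \to \eE^{-1}_T$ that factor through the quotient $\pP \twoheadrightarrow \pP^{\lambda \le 0}$; but $V^{\lambda \le 0} = V^{\lambda=0}$ has dimension $d-1$ and is identified with $\pP'$, so $\beta$ factors as $\pP \twoheadrightarrow \pP' \xrightarrow{\beta'} \eE^{-1}_T$, and there is no map out of $\pP''$ — this produces the bottom two rows, with $\pP'' \to \eE^{-1}_T$ forced to be zero. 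Assembling, a $T$-point of $\yY(d)^{\lambda \ge 0}$ is exactly a diagram~(\ref{ex:Tvalue}). Then the identification of $p_{\lambda}$ and $q_{\lambda}$: $p_{\lambda}$ forgets the filtration and returns the middle object $(\pP, \alpha, \beta)$, while $q_{\lambda}$ takes the $t \to 0$ limit, which extracts the associated graded $\pP'' \oplus \pP'$; on the $\hH om$ data the limit kills the components that degenerate and retains $(\pP', \alpha', \beta')$ on the weight-zero part and the trivial $(\pP'', 0, 0)$ on the weight-one part, which is the asserted description of $q_{\lambda}$ under the decomposition $\yY(d)^{\lambda=0} = B\C \times \yY(d-1)$.

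The main obstacle I anticipate is making the $\mathbb{G}_m$-equivariant bookkeeping precise — specifically, verifying that the quotient stack presentation of $\yY(d)^{\lambda \ge 0}$ written in terms of $\GL_X(V)^{\lambda \ge 0}$ acting on $\hH om(\eE^0, V^{\lambda \ge 0} \otimes \oO_X) \oplus \hH om(V^{\lambda \le 0} \otimes \oO_X, \eE^{-1})$ really does match the mapping-stack description, and that the induced maps $\alpha'$, $\beta'$ are well-defined (independent of splittings) as morphisms of stacks rather than merely on points. This amounts to checking that the parabolic $\GL(V)^{\lambda \ge 0}$ is precisely the automorphism group of a rank-$d$ bundle preserving a fixed-type filtration, and that the vector-space data transforms correctly; I would handle it by reducing to the universal case over $[X/\GL_X(V)]$ and invoking the $\Theta$-stack formalism of~\cite{Halpinstab} as in Remark~\ref{rmk:loci}, so that functoriality in $T$ is automatic. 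The rest is a routine unravelling of definitions, so I would state the lemma's proof as: reduce to the mapping-stack description via Remark~\ref{rmk:loci}, then read off the diagram and the maps $p_{\lambda}$, $q_{\lambda}$ directly.
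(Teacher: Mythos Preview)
Your approach is correct but differs from the paper's. You go through the $\Theta$-mapping stack interpretation of Remark~\ref{rmk:loci}, reading off the filtered-object description of $\mathrm{Map}(\Theta,\yY(d))$ from~\cite{Halpinstab} and then specializing to the chosen $\lambda$. The paper instead works directly with the quotient-stack presentation: it factors the structure map as
\[
\yY(d)^{\lambda\ge 0}\hookrightarrow [Y(d)/\GL_X(V)^{\lambda\ge 0}]\to [X/\GL_X(V)^{\lambda\ge 0}]\to [X/\GL_X(V)]\to X
\]
and unwinds each stage by hand. A lift to $[X/\GL_X(V)]$ is a rank-$d$ bundle $\pP$; the further lift to $[X/\GL_X(V)^{\lambda\ge 0}]$ uses that the fiber is $\GL(V)/\GL(V)^{\lambda\ge 0}\cong\mathbb{P}(V)$, hence records a line subbundle $\pP''\subset\pP$; a lift to $[Y(d)/\GL_X(V)^{\lambda\ge 0}]$ adds the maps $\alpha,\beta$; and finally the condition for this lift to land in the closed substack $\yY(d)^{\lambda\ge 0}$ is exactly that $\beta$ factor through $\pP'$, since $V^{\lambda\ge 0}=V$ and $V^{\lambda\le 0}=V/V^{\lambda>0}$. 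This is more elementary and entirely self-contained: it never invokes the mapping-stack dictionary, so the ``main obstacle'' you flag (matching the quotient-stack and mapping-stack descriptions) simply does not arise. Your route is more conceptual and makes the filtration picture transparent, but it imports machinery that the paper can do without for this lemma. One minor slip: you call the weight-one graded piece ``the rank-one quotient'', but for $\lambda(t)=(t,1,\ldots,1)$ the weight-one part $V^{\lambda>0}$ is the rank-one \emph{sub}space, which is why $\pP''$ sits as a subbundle in~(\ref{ex:Tvalue}); your exact sequence is written correctly regardless.
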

\begin{proof}
We set $\zZ(d)^{\lambda \ge 0}=[Y(d)/\GL_X(V)^{\lambda \ge 0}]$
where $Y(d)$ is given in (\ref{def:Yd}). 
We have the factorization of the projection $\yY(d)^{\lambda \ge 0} \to X$
\begin{align*}
	\yY(d)^{\lambda \ge 0} \hookrightarrow 
	\zZ(d)^{\lambda \ge 0} 
	\to [X/\GL_X(V)^{\lambda \ge 0}] \to 
	[X/\GL_X(V)] \to X. 
	\end{align*}	
Here $\GL_X(V)^{\lambda \ge 0}$ and $\GL_X(V)$ act on $X$ trivially. 
For $T \to X$, giving its lift to 
$[X/\GL_X(V)]$ is equivalent to giving a vector bundle 
$\pP \to X$ of rank $d$. 
The fiber of $[X/\GL_X(V)^{\lambda \ge 0}] \to [X/\GL_X(V)]$
is $[\GL(V)/\GL(V)^{\lambda \ge 0}]$. 
Since $\GL(V)^{\lambda \ge 0}$ is the subgroup of 
$\GL(V)$ which preserves 
the one dimensional subspace $V^{\lambda >0} \subset V$, 
the stack $[\GL(V)/\GL(V)^{\lambda \ge 0}]$
is isomorphic to the projective space $\mathbb{P}(V)$ which parametrizes
one dimensional subspaces in $V$. 
Therefore giving a lift of $T \to [X/\GL_X(V)]$ to $[X/\GL_X(V)^{\lambda \ge 0}]$
is equivalent to giving a rank one vector subbundle $\pP'' \subset \pP$. 
By taking its cokernel, we obtain the exact sequence 
$0 \to \pP'' \to \pP \to \pP' \to 0$
of the middle horizontal sequence in (\ref{lem:dia:P}). 
Then giving its lift to $T \to \zZ(d)^{\lambda \ge 0}$ is equivalent to giving 
morphisms 
$\eE_T^0 \to \pP \to \eE_T^{-1}$. 
Since $V^{\lambda \ge 0}=V$ and $V^{\lambda \le 0}=V/V^{\lambda >0}$, the above lift 
$T \to \zZ(d)^{\lambda \ge 0}$
factors through $T \to \yY(d)^{\lambda \ge 0}$ if and only if 
$\pP \to \eE_T^{-1}$ factors through $\pP \twoheadrightarrow \pP' \to \eE_T^{-1}$. 
Therefore we obtain the lemma. 
	\end{proof}

The functor (\ref{funct:p}) gives 
the categorified Hall product 
\begin{align*}
	\ast \colon 
	\oO_{B\C}(j) \boxtimes \MF^{\C}(\yY(d-1), w)
	\to \MF^{\C}(\yY(d), w)
	\end{align*}
which is in fact induced by the stack of the diagrams (\ref{ex:Tvalue}). 
By the iteration, we also have the functor 
\begin{align}\label{cat:Hall}
	\ast \colon \oO_{B\C}(j_1) \boxtimes \cdots \boxtimes 
	\oO_{B\C}(j_l) \boxtimes \MF^{\C}(\yY(d-l), w) 
	\to \MF^{\C}(\yY(d), w). 
	\end{align}
In the case that $X$ is a point, the above product is a special 
case of categorical Hall products for quivers with super-potential 
(see~\cite[Section~3]{Tudor1.5}). 
The above product is their generalization to the family of moduli stacks
of representations of quivers. 
\subsection{Semiorthogonal decomposition}
We take a 
lexicographical order on $\mathbb{Z}^d$, i.e. 
for $m_{\bullet}=(m_1, \ldots, m_d) \in \mathbb{Z}^d$
and $m_{\bullet}'=(m_1', \ldots, m_d') \in \mathbb{Z}^d$, 
we write $m_{\bullet} \succ m_{\bullet}'$ if 
$m_i=m_i'$ for $1\le i\le k$ for some $k \ge 0$ and 
$m_{k+1}>m_{k+1}'$. 
	For
	$j_{\bullet}=(j_1, j_2, \ldots, j_l)$ 
	and 
	$j_{\bullet}'=(j_1', j_2' \ldots, j_{l'}')$
	with $l, l' \le d$, 
	we 
	define $j_{\bullet} \succ j_{\bullet}'$ if 
	we have $\widetilde{j}_{\bullet} \succ \widetilde{j}_{\bullet}'$, 
	where 
	$\widetilde{j}_{\bullet}$ is defined by 
	\begin{align}\label{jtilde}
		\widetilde{j}_{\bullet}=(j_1, j_2, \ldots, j_l, -1, \ldots, -1) \in \mathbb{Z}^d. 
	\end{align}
We recall that $(a, b, \delta)$ is defined in (\ref{abdelta}), 
and $\chi_0$ is the determinant character (\ref{det:chi0})
which determines a line bundle on $\yY(d)$. 
Below, we also assume that $\delta \ge 0$, i.e. $a\ge b$. 
By abuse of notation, we use the same symbol $\chi_0$ 
for the line bundle on $\yY(d')$ for any other $d'$ 
defined by the determinant character
on $\GL(\mathbb{C}^{d'})$. 

\begin{prop}\label{thm:hall}
	For $0\le j_1 \le \cdots \le j_l \le \delta-l$, 
	the categorified Hall product (\ref{cat:Hall})
	restricts to the fully-faithful functor
	\begin{align}\label{FF:W}
		\ast \colon 
		\oO_{B\C}(j_1) \boxtimes \cdots \boxtimes \oO_{B\C}(j_l)
		\boxtimes (\mathbb{W}_b(d-l)\otimes \chi_0^{j_l})
		\to \mathbb{W}_a(d)
		\end{align}
	such that, by setting 
	$\cC_{j_{\bullet}}$ to be the essential image
	of the above fully-faithful functor, 
	we have the semiorthogonal decomposition 
	\begin{align*}
		\mathbb{W}_a(d)=
		\langle \cC_{j_{\bullet}}:  0\le l\le d, 
		j_{\bullet}=(0\le j_1 \le \cdots \le j_l \le \delta-l)\rangle.  
		\end{align*}
	Here $\Hom(\cC_{j_{\bullet}}, \cC_{j_{\bullet}'})=0$ 
	for $j_{\bullet} \succ j_{\bullet}'$. 
		\end{prop}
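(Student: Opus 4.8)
The plan is to establish the three assertions of the proposition in turn: that the iterated Hall product (\ref{cat:Hall}), restricted to the source of (\ref{FF:W}), takes values in $\mathbb{W}_a(d)$, so that (\ref{FF:W}) makes sense; that it is then fully faithful; and that the essential images $\cC_{j_\bullet}$ are semiorthogonal in the stated order and generate $\mathbb{W}_a(d)$. Since the categorified Hall product is induced by the correspondence $\yY(d)^{\lambda=0}\leftarrow\yY(d)^{\lambda\ge 0}\to\yY(d)$ of stacks over $X$, which is independent of the super-potential $w$, I would first reduce---exactly as in the last step of the proof of Lemma~\ref{lem:window}, via~\cite[Proposition~2.1]{Tudor1.5}---to the analogous statement for $D^b([\yY(\bullet)/\C])$, carrying the auxiliary $\C$-action but no $w$. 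There all the categories and functors are $D^b(X)$-linear and all the Hom-spaces between the relevant generators $V(\chi)\otimes_{\oO_X}\pP\langle\tau^i\rangle$ are Zariski-local on $X$, so by the gluing argument of Lemma~\ref{lem:window} one may assume $\eE^0,\eE^{-1}$ are free $\oO_X$-modules, whence $\yY(d)\cong\rR_{Q_{a,b}}(d)\times X$ and, by a further $\boxtimes$-factorisation over $D^b(X)$, one is reduced to the quiver case $X=\mathrm{pt}$---where the semiorthogonal decomposition is (an extension of) the one obtained in~\cite{Toconi, Tudor1.5}. Alternatively one can keep $X$ general throughout and replace Borel--Weil--Bott on flag varieties by its relative form for flag bundles over $X$, as in~\cite[Theorem~3.70]{JiangQuot}.

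For the first assertion the essential computation is the effect of a single Hall product on a window generator. By Lemma~\ref{lem:dia:P} the product $\mathcal{O}_{B\C}(j)\ast(-)$ is $p_{\lambda\ast}q_{\lambda}^{\ast}$ for $\lambda(t)=(t,1,\ldots,1)$, and $p_\lambda$ factors as the closed immersion of $\yY(d)^{\lambda\ge 0}$---where $\beta$ vanishes on the tautological line $\pP''\subset\pP$---into the $\mathbb{P}(\pP)$-bundle $\{\pP''\subset\pP\}$ over $\yY(d)$, followed by the flag projection. Hence $p_{\lambda\ast}q_\lambda^\ast$ applied to $\mathcal{O}_{B\C}(j)\boxtimes(V(\mu)\otimes_{\oO_X}\pP\langle\tau^i\rangle)$ unwinds into a Koszul complex along the conormal $\pP''\otimes\eE_1$ followed by Borel--Weil--Bott on $\mathbb{P}(\pP)=\GL(V)/\GL(V)^{\lambda\ge 0}$, so that $\mathcal{O}_{B\C}(j)\ast(V(\mu)\otimes_{\oO_X}\pP\langle\tau^i\rangle)$ is, up to shift and a power of $\tau$, an iterated extension of objects of the form $V(\nu)\otimes_{\oO_X}\pP'\langle\tau^{i'}\rangle$ with $\pP'\in D^b(X)$ and $\nu$ determined from $\mu$, $j$ and the Koszul degree by the Bott straightening rule (or $0$). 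Iterating $l$ times and keeping track of the $\chi_0^{j_l}$-twist, a direct bookkeeping shows that for $\mu\in\mathbb{B}_b(d-l)$ and $0\le j_1\le\cdots\le j_l\le\delta-l$ every $\nu$ that arises lies in $\mathbb{B}_a(d)$; the relevant numerics reduce to $\delta-l\le a-d$, which holds whenever $\mathbb{B}_b(d-l)\ne\emptyset$ (i.e.\ $d-l\le b$). This makes (\ref{FF:W}) well-defined.

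Fully faithfulness I would prove by the standard adjunction-and-base-change argument for categorified Hall products (cf.~\cite[Section~3]{Tudor1.5} and~\cite{Toconi}): one computes $\Hom_{\mathbb{W}_a(d)}$ between two objects in the image of (\ref{FF:W}) using that $p_\lambda$ is proper and $q_\lambda$ an affine-space bundle, and base-changes along the iterated fibre products of $\yY(d)^{\lambda\ge 0}$ over $\yY(d)$; these fibre products are stacks of pairs of (partial) flags of subspaces of $V$, stratified by relative position. The open stratum (the diagonal) reproduces the identity functor, and the contributions of the remaining strata are acyclic by Borel--Weil--Bott precisely because $0\le j_1\le\cdots\le j_l\le\delta-l$. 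As this is the same mechanism as in the quiver case, I would deduce fully faithfulness from (the argument of) the cited statements.

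For the last assertion, the semiorthogonality $\Hom(\cC_{j_\bullet},\cC_{j_\bullet'})=0$ for $j_\bullet\succ j_\bullet'$ is once more base change plus Bott, now comparing the flag strata attached to two distinct sequences; the lexicographic order on the padded sequences $\widetilde{j}_\bullet$ of (\ref{jtilde}) is exactly the one that makes the relevant homogeneous bundles acyclic in the required direction. For generation I would argue by descending induction on $\chi\in\mathbb{B}_a(d)$ in the lexicographic order: the single-step computation of the second paragraph realises $V(\chi)\otimes_{\oO_X}\pP\langle\tau^i\rangle$, modulo objects indexed by strictly smaller $\chi$, as a Hall product $\mathcal{O}_{B\C}(j)\ast(-)$ of a window object of $\mathbb{W}_b(d-1)\otimes\chi_0^j$, whence $\langle\cC_{j_\bullet}\rangle=\mathbb{W}_a(d)$. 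I expect the main obstacle to be the combinatorial heart of all this: verifying that the Bott straightening induces, compatibly with $\succ$, a bijection between $\mathbb{B}_a(d)$ and the indexing set $\{(l;\,0\le j_1\le\cdots\le j_l\le\delta-l;\,\mu\in\mathbb{B}_b(d-l))\}$, and that the homogeneous bundles on the flag correspondences are acyclic exactly in the stated range---equivalently, that the Hall product is block upper-triangular for $\succ$ with invertible diagonal blocks. Once this combinatorics is in hand, fully faithfulness, semiorthogonality and generation all follow formally from the projection formula, base change and the window equivalences of Lemma~\ref{lem:window}.
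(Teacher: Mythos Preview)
Your approach is sound and reaches the same destination as the paper, but via a different localization step. The paper's proof is much terser: it notes that the categorified Hall products are defined globally and admit right adjoints (by the argument of \cite[Lemma~6.6]{Toconi}), so that fully-faithfulness and the semiorthogonal decomposition can be checked \emph{formally locally} on $X$ (invoking \cite[Proposition~6.9, Theorem~6.11]{Totheta} or the last part of \cite[Theorem~5.16]{Toconi}); after formal completion $X$ is the spectrum of a complete local ring, where the result is already \cite[Corollary~4.22]{Toconi}, and adding the auxiliary $\C$-action is declared verbatim. You instead strip the super-potential first (as in Lemma~\ref{lem:window}), reduce Zariski-locally to free $\eE^i$, and then $\boxtimes$-factor over $D^b(X)$ to reach $X=\mathrm{pt}$. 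Both reductions are legitimate; the paper's formal-local route is cleaner because it plugs directly into the cited result without the $\boxtimes$-step and without reproving the Koszul/Borel--Weil--Bott combinatorics you correctly flag as the main obstacle, whereas your detailed sketch of that mechanism is essentially an outline of what \cite[Corollary~4.22]{Toconi} already packages.
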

	\begin{proof}
		The proposition is given in~\cite[Corollary~4.22]{Toconi}
		when $X$ is a spectrum of a complete local ring 
		and there is no auxiliary $\C$-action. The argument applies 
		verbatim with an auxiliary $\C$-action. 
		The categorified Hall products are defined globally, 
		and they have right adjoints by the same proof in~\cite[Lemma~6.6]{Toconi}. 
		Therefore in order to show that (\ref{FF:W}) is fully-faithful and 
		forms a semiorthogonal decomposition, it is enough to check 
		these properties formally locally 
		on $X$ (see the arguments of~\cite[Proposition~6.9, Theorem~6.11]{Totheta}
		or the last part of~\cite[Theorem~5.16]{Toconi}). 
		Therefore the proposition holds. 
		\end{proof}
	
	The following is the main result in this paper: 
	\begin{thm}\label{cor:sod}
		Suppose that $\delta \ge 0$. Then there 
		is a semiorthogonal decomposition of the form 
	\begin{align*}
		D^b(\mathbf{Quot}_{X, d}(\mathscr{G}))=
		\left\langle \binom{\delta}{i}\mbox{-copies of }
		D^b(\mathbf{Quot}_{X, d-i}(\mathscr{H})) : 
		0\le i\le \mathrm{min}\{d, \delta\}  \right\rangle. 		
	\end{align*}	
		\end{thm}
\begin{proof}
	In Proposition~\ref{thm:hall}, each semiorthogonal 
	summand $\cC_{j_{\bullet}}$ is equivalent to 
	$\mathbb{W}_b(d-l)$. 
	Therefore the corollary follows from Proposition~\ref{thm:hall}
	together with Lemma~\ref{lem:window} and equivalences (\ref{equiv:quot}). 
	\end{proof}
		\bibliographystyle{amsalpha}
\bibliography{math}

\vspace{5mm}

Kavli Institute for the Physics and 
Mathematics of the Universe (WPI), University of Tokyo,
5-1-5 Kashiwanoha, Kashiwa, 277-8583, Japan.

\textit{E-mail address}: yukinobu.toda@ipmu.jp
\end{document}